\documentclass[a4paper,12pt]{amsart}
\usepackage{amsfonts,amsmath,amssymb,amsthm}
\usepackage{latexsym}
\usepackage{eucal}
\usepackage[dvips]{graphics}

\usepackage{mathabx}
\usepackage[english]{babel}
\usepackage{epsfig}
\usepackage[usenames]{color}
\usepackage[colorlinks=true, linkcolor=red, citecolor=blue]{hyperref}

\newcommand{\R}{{\mathbb R}}

\newcommand{\tr}{\mbox{tr}}

\newtheorem{theorem}{Theorem}[section]

\newtheorem{lemma}[theorem]{Lemma}

\newtheorem{proposition}[theorem]{Proposition}

\theoremstyle{definition}

\theoremstyle{remark}
\newtheorem*{remarks}{Remarks}

\newtheorem{assumptions}{Assumptions}

\makeatletter
\@namedef{subjclassname@2020}{\textup{2020} Mathematics Subject Classification}
\makeatother

\newcommand{\field}[1]{\mathbb{#1}}
\newcommand{\C}{\field{C}}

\newcommand{\M}{\mathcal{M}}
\newcommand{\remove}[1]{ }

\def\R{\mathbb R}
\def\be{\begin{equation}}
\def\ee{\end{equation}}
\def\ba{\begin{eqnarray}}
\def\ea{\end{eqnarray}}

\setlength{\oddsidemargin}{ 0.0 in} \setlength{\parindent}{ 24pt}
\setlength{\evensidemargin}{ 0.0 in} \setlength{\parindent}{ 24pt}
\setlength{\textheight} {9.6 in} \setlength{\textwidth}{ 6.6 in}
\setlength{\topmargin}{ -.4 in}

\numberwithin{equation}{section}

\begin{document}
\title[Kawahara equation with boundary memory]
%{\bf Large Time Behavior for Kawahara equation with memory effect }
{\bf  Asymptotic behavior of Kawahara equation with memory effect}

%----------Author 1
\author[Capistrano-Filho]{Roberto de A.  Capistrano--Filho*}
\thanks{*Corresponding author: roberto.capistranofilho@ufpe.br}
\address{
Departamento de Matem\'atica, Universidade Federal de Pernambuco\\
Cidade Universit\'aria, 50740-545, Recife (PE), Brazil}
\email{roberto.capistranofilho@ufpe.br}
\thanks{Capistrano--Filho was supported by CNPq grants numbers  307808/2021-1, 401003/2022-1, and 200386/2022-0, CAPES  grants numbers 88881.311964/2018-01 and 88881.520205/2020-01, and MATHAMSUD 21-MATH-03.}

%\email{roberto.capistranofilho@ufpe.br}

%\thanks{This work was completed with the support of our
%\TeX-pert.}
%----------Author 2
\author[Chentouf]{Boumedi\`ene Chentouf}
%\thanks{*Corresponding author.}
\address{
Faculty of Science, Kuwait University \\
Department of Mathematics, Safat 13060, Kuwait}
\email{boumediene.chentouf@ku.edu.kw}

%----------Author 3

\author[de Jesus]{Isadora Maria de Jesus}
\address{Departamento de Matem\'atica,\ Universidade Federal de Pernambuco (UFPE), 50740-545, Recife (PE), Brazil and Instituto de Matemática,\ cUniversidade Federal de Alagoas (UFAL),\ Maceió (AL), Brazil.}
\email{isadora.jesus@im.ufal.br; isadora.jesus@ufpe.br}

\subjclass[2020]{Primary: 37L50, 93D15, 93D30. Secondary: 93C20}
\keywords{Kawahara equation; boundary memory term; behavior of solutions; energy decay}

\begin{abstract}
In this work, we are interested in a detailed qualitative analysis of the Kawahara equation, a model that has numerous physical motivations such as magneto-acoustic waves in a cold plasma and gravity waves on the surface of a heavy liquid. First, we design a feedback control law, which combines a damping component and another one of finite memory-type. Then, we are capable of proving that the problem is well-posed under a condition involving the feedback gains of the boundary control and the memory kernel. Afterwards, it is shown that the energy associated with this system exponentially decays.
\end{abstract}

\date{\today}
\maketitle

\thispagestyle{empty}

\section{Introduction}
\subsection{Background and literature review}
Water wave models have been studies by many scientists from numerous disciplines such as hydraulic engineering, fluid mechanics engineering, physics as well as mathematics. These models are in general hard to derive, and complex to obtain qualitative information on the dynamics of the waves. This makes their studies interesting and challenging. Recently,  appropriate assumptions on the amplitude, wavelength, wave steepness, and so on, are invoked to investigate the asymptotic models for water waves and understand the full water wave system (see, for instance, \cite{ASL,BLS,Lannes} and references therein for a rigorous justification of various asymptotic models for surface and internal waves).

As a matter of fact, it has been noticed that the water waves can be considered as a free boundary problem of the incompressible, irrotational Euler equation in an appropriate non-dimensional form. This means that there are two non-dimensional parameters $\delta := \frac{h}{\lambda}$ and $\varepsilon := \frac{a}{h}$, where the water depth, the wavelength, and the amplitude of the free surface are respectively denoted by $h, \lambda$ and $a$. On the other hand, the parameter $\mu$, known as the Bond number, measures the importance of gravitational forces compared to surface tension forces. We also note that the long waves (also called shallow water waves) are mathematically characterised by the condition $\delta \ll 1$. Obviously, there are several long-wave approximations depending on the relation between $\varepsilon$ and $\delta$.

The above discussion led to, instead of studying models that do not give good asymptotic properties, we can rescale the parameters mentioned above to find systems that reveal asymptotic models for surface and internal waves, like the Kawahara model.  Precisely, letting $\varepsilon = \delta^4 \ll 1$, $\mu = \frac13 + \nu\varepsilon^{\frac12}$, and the critical Bond number $\mu = \frac13$,  the so-called equation Kawahara equation is put forward. Such an equation was derived by Hasimoto and Kawahara \cite{Hasimoto1970,Kawahara}  and takes the form
\[\pm2 W_t + 3WW_x - \nu W_{xxx} +\frac{1}{45}W_{xxxxx} = 0,\]
or, after re-scaling,
\begin{equation}\label{fda1}
	W_{t}+\alpha W_{x}+\beta W_{xxx}-W_{xxxxx}+WW_{x}=0.
\end{equation}
The latter is also seen as the fifth-order KdV equation \cite{Boyd}, or singularly perturbed KdV equation \cite{Pomeau}. It describes a dispersive partial differential equation with numerous wave physical phenomena such as magneto-acoustic waves in a cold plasma \cite{Kakutani}, the propagation of long waves in a shallow liquid beneath an ice sheet \cite{Iguchi}, gravity waves on the surface of a heavy liquid \cite{Cui}, etc.

Note that valuable efforts in the last decays were made to understand this model in various research frameworks. For example, numerous works focused on the analytical and numerical methods for solving \eqref{fda1}. These methods include the tanh-function method \cite{Berloff}, extended tanh-function method \cite{Biswas}, sine-cosine method \cite{Yusufoglu}, Jacobi elliptic functions method \cite{Hunter}, direct algebraic method and numerical simulations \cite{Polat}, decompositions methods \cite{Kaya}, as well as the variational iterations and homotopy perturbations methods \cite{Jin}. Another direction is the study of the Kawahara equation from the point of view of control theory and specifically, the controllability and stabilization problem \cite{ara}, which is our motivation.

Whereupon, we are interested in a detailed qualitative analysis for the system \eqref{fda1} in a bounded region. More precisely, our primary concern is to analyze the qualitative properties of solutions to the initial-boundary value problem associated with the model \eqref{fda1} posed on a bounded interval under the presence of damping and memory-type controls.

Now, we will present some previous results that dealt with the asymptotic behavior of solutions for the Kawahara model \eqref{fda1} in a bounded domain. One of the first outcomes is due to Silva and Vasconcellos \cite{vasi1,vasi2}, where the authors studied the stabilization of global solutions of the linear Kawahara equation, under the effect of a localized damping mechanism. The second endeavor is completed by Capistrano-Filho \textit{et al.} \cite{ara}, where the generalized Kawahara equation in a bounded domain is considered, that is, a more general nonlinearity $W^p \partial_x W$ with $p\in [1,4)$ is taken into account. It is proved that the solutions of the Kawahara system decay exponentially when an internal damping mechanism is applied.

Recently,  a new tool for the control properties of the Kawahara operator was proposed in \cite{CaSo}. In this work, the authors treated the so-called overdetermination control problem for the Kawahara equation. Precisely, a boundary control is designed so that the solution to the problem under consideration satisfies an integral condition. Furthermore, when the control acts internally in the system, instead of the boundary, the authors proved that this integral condition is also satisfied.

We conclude the literature review with three recent works. In \cite{CaVi, boumediene} the stabilization of the Kawahara equation with a localized time-delayed interior control is considered. Under suitable assumptions on the time delay coefficients, the authors proved that the solutions of the Kawahara system are exponentially stable. This result is established by means of either the Lyapunov method or a compactness-uniqueness argument. More recently, the Kawahara equation in a bounded interval and with a delay term in one of the boundary conditions was studied in \cite{luan}. The authors used two different approaches to prove that the solutions of \eqref{fda1} are exponentially stable under a condition on the length of the spatial domain. We point out that this is a small sample of papers that were concerned with the stabilization problem of the Kawahara equation in a bounded interval. Of course, we suggest that the reader, who is interested in more details on the topic, consult the papers cited above and the references therein.

Let us now present the framework of this article.
\subsection{Problem setting and main results}
Consider the system \eqref{fda1} in a bounded domain $\Omega=(0,\ell)$, where $\ell > 0$ is the spatial length, under the action of the following feedback:
\begin{equation}\label{sis1}
\begin{cases}
	\begin{aligned}
		\partial_{t} \omega(t,x)+&\alpha \partial_x \omega(t,x) +\beta\partial_x^3 \omega(t,x)- \partial_x^5 \omega(t,x)\\
		& + {\omega^p}(t,x) \partial_x \omega(t,x)=0,
	\end{aligned} & x\in \Omega,\ t>0, \\
	\omega (t,0) =\omega (t,\ell) =0 ,& t>0, \\
   \partial_x \omega(t,0)=\partial_x \omega(t,\ell) =0,& t>0, \\
	\partial_x^2 \omega(t,\ell)=\mathcal{F}(t),& t>0,\\
	\partial_x^{2}\omega(t,0)=z_0(t), &  t\in \mathcal{I},\\
	\omega(0,x) =\omega_{0} (x), & x \in\Omega,
\end{cases}
\end{equation}
with $\omega_0$, $z_0$ are initial data and the feedback law is a linear combination of the damping and finite memory terms given by 
\begin{equation}\label{fdl}
\mathcal{F}(t):=\nu_1 \partial_x^2 \omega(t,0)+\nu_2 \int_{t-\tau_2}^{t-\tau_1} \sigma(t-s)  \partial_x^2 \omega (s,0) \, ds.
\end{equation}
Here, $\alpha >0$ and $\beta>0$ are physical parameters, $p \in \{1,2\}$, whereas $\nu_1$ and $\nu_2$ are nonzero real numbers. In turn, $0<\tau_1 < \tau_2$ correspond to the finite memory interval $(t-\tau_1, t-\tau_2)$. Moreover, $\mathcal{I}=( -\tau_2, 0 )$, and the memory kernel is denoted by $\sigma(s)$. Of course, the presence of a memory term is usually ubiquitous in practice. Particularly, memory is of great importance in systems control as they are governed by equations, where the past values of one or more variables involved in the system play a crucial role. On the other hand, the impact of a memory term in some systems can be deleterious as it can affect their performance \cite{bc1, bc2, nipi}. Last but not least, we indicate that the memory term, that arises in the boundary control \eqref{fdl}, could reflect the case of a compressible (or incompressible) viscoelastic fluid. The latter is regarded as the simplest material with memory \cite{afg, da}.

On the other hand, let us note that the energy associated with the full system \eqref{sis1} is given by
\begin{equation}\label{energia} 
    \mathcal{E}(t)= \int_{\Omega}\omega^2 (t,x)dx + |\nu_2| \int_{\mathcal{M}} s \sigma(s) \left( \int_{\Omega_0}   (\partial_x^2 \omega)^2 (t-s \phi,0) \, d\phi \right)ds, \ t \geq 0.
\end{equation}
Naturally, as we are interested in the behavior of the system \eqref{sis1}, we need to check whether the feedback law, given by \eqref{fdl}, represents a damping mechanism. In other words, we would like to see if, in the presence of the boundary memory-type feedback law, the energy of the system \eqref{energia} tends to zero state with some specific decay rate, when $t$ goes to $0$.  This situation can be presented in the following question:

\vspace{0.2cm}
\noindent\textbf{Question:} \textit{Does $\mathcal{E}(t)\longrightarrow0$, as $t\to\infty$? If it is the case, is it possible to come up with a decay rate?}
\vspace{0.2cm}

To answer the previous question for the system \eqref{sis1}, we will assume, from now on, that the memory kernel $\sigma$ obeys the following conditions:
\begin{assumptions}\label{assK}
The function $\sigma \in \ell^{\infty} (\mathcal{M})$, where $\mathcal{M}:=(\tau_1, \tau_2)$. In turn, we assume that
$$\sigma (s) > 0,\quad \text{a.e. in}\  \mathcal{M}.$$
Moreover, the feedback gains $\nu_1$ and $\nu_2$ together with the memory kernel satisfy
\begin{equation}\label{ab}
|\nu_1| + |\nu_2| \displaystyle \int_{\mathcal{M}} \sigma(s) \, ds<1.
\end{equation}
\end{assumptions}
\vspace{2mm}

Some notations, that we will use throughout this manuscript,  are presented below:
\begin{itemize}
\item[(i)] We denote by $(\cdot,\cdot)_{\mathbb{R}^{2}}$ the canonical inner product of $\mathbb{R}^{2}$, whereas $\langle\cdot,\cdot\rangle$ denotes the canonical inner product of $\ell^2(\Omega)$ whose induced norm is $\|\cdot\|$.

	\item[(ii)] For $T>0$, consider the space of solutions
	\begin{equation*}%\label{sol. space}
		Y_{T}= C(0,T;L^2(\Omega))\cap L^{2}(0,T; { H_0^2}(\Omega))
	\end{equation*}
equipped with the norm
\begin{equation*}
\|v\|_{Y_{T}}^2= \left(\max_{t \in (0,T)}\|v(t,\cdot )\| \right)^2 + \int_{0}^{T}\|v(t,\cdot )\|^{2}_{{ H_0^2}(\Omega)}dt.
\end{equation*}

	\item[(iii)] Let $\Omega_0=(0,1)$ and $\mathcal{Q}:=\Omega_0 \times \mathcal{M}$. Then, we consider the spaces
$$H:=L^2 (\Omega) \times L^2 (\mathcal{Q}), \quad \mathcal{H}:=L^2 (\Omega) \times L^2 (\mathcal{I} \times \mathcal{M}),$$
respectively equipped with the following inner product:
$$
\left\{ \begin{array}{l}
\displaystyle \langle(\omega,z),(v,y)\rangle_{H}=\langle \omega,v\rangle +|\nu_2| \int_{\mathcal{M}} \int_{\Omega_0}  s \sigma(s) z(\phi,s)y(\phi,s)  ~ d\phi ds,  \\[4mm]
\displaystyle  \langle(\omega,z),(v,y)\rangle_\mathcal{H}=\langle \omega,v\rangle + |\nu_2|  \int_{\mathcal{I}} \int_{\mathcal{M}} \sigma(s) z(r,s)y(r,s)\, ds dr.
\end{array}
\right.
$$

\end{itemize}

Subsequently, we can state our first main result:
\begin{theorem}\label{Lyapunov}Under the assumptions \ref{assK} and assuming that the length $\ell$ fulfills the smallness condition
\begin{equation}\label{L}
0< \ell < \pi \sqrt{\frac{3\beta}{\alpha}},
\end{equation}
there exists $r>0$ sufficiently small, such that for every $(\omega_{0}, z_{0})\in H$ with $\|(\omega_{0}, z_{0})\|_{H} < r$, the energy of the system \eqref{sis1}, given by \eqref{energia}, is exponentially stable. In other words, there exist two positive constants $\kappa$ and $\mu$ such that
\begin{equation}\label{exp decay}
	\mathcal{E}(t) \leq \kappa E(0)e^{-2\mu t}, \ t > 0,
\end{equation}
where $\mathcal{E}(t)$ is defined by \eqref{energia}.
\end{theorem}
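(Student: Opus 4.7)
\medskip

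\noindent\textbf{Proof plan.} The natural framework is to unfold the boundary memory term into a transport equation, following Nicaise--Pignotti, by setting
\begin{equation*}
z(t,\rho,s)=\partial_x^2 u(t-\rho s,0),\qquad \rho\in\Omega_0=(0,1),\ s\in\mathcal{M}=(\tau_1,\tau_2),
\end{equation*}
so that $sz_t+z_\rho=0$ with $z(t,0,s)=\partial_x^2 u(t,0)$. In this way \eqref{sis1}--\eqref{fdl} is recast as a coupled PDE-transport system in the phase space $H$, and the memory integral in $\mathcal{F}(t)$ becomes $\beta\int_{\tau_1}^{\tau_2}\lambda(s)z(t,1,s)\,ds$. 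Well-posedness (presumably established earlier in the paper) is what makes the energy identity rigorous.

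The strategy is the Lyapunov functional method. First I would differentiate the energy $E(t)$ in \eqref{energia} along sufficiently smooth solutions: integrating by parts on the PDE part produces the dispersive boundary contribution $-\tfrac12(\partial_x^2 u(t,0))^2+\tfrac12(\partial_x^2 u(t,L))^2=-\tfrac12(\partial_x^2 u(t,0))^2+\tfrac12\mathcal{F}(t)^2$, while differentiating the memory part and using the transport equation together with an integration by parts in $\rho$ gives the traces $z(t,0,s)^2$ and $z(t,1,s)^2$. Expanding $\mathcal{F}(t)^2$ via Cauchy--Schwarz and then invoking the structural assumption \eqref{ab} (with Young's inequality and an appropriate splitting governed by $|\alpha|+|\beta|\int\lambda$) yields
\begin{equation*}
\frac{d}{dt}E(t)\le -C_0\Big[(\partial_x^2 u(t,0))^2+\int_{\tau_1}^{\tau_2}\lambda(s)z(t,1,s)^2\,ds\Big]
\end{equation*}
for some $C_0>0$, i.e.\ dissipation of $E$.

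Dissipativity alone does not give exponential decay because $E$ does not directly control $\|u\|^2$. I would therefore add two standard multipliers:
\begin{equation*}
V_1(t)=\int_0^L x\,u^2(t,x)\,dx,\qquad V_2(t)=|\beta|\int_{\tau_1}^{\tau_2}\int_0^1 s(1-\rho)\lambda(s)z(t,\rho,s)^2\,d\rho\,ds,
\end{equation*}
and work with $V(t)=E(t)+\varepsilon_1 V_1(t)+\varepsilon_2 V_2(t)$, which is equivalent to $E$ for small $\varepsilon_i$. The multiplier $xu$, applied to the PDE, produces $-\tfrac{a}{2}\|u\|^2-\tfrac{3b}{2}\|\partial_x u\|^2+\tfrac{5}{2}\|\partial_x^2 u\|^2+\ldots$ plus a cubic term from $u^p\partial_x u$ and boundary terms at $x=0$; the crucial cancellation uses the Poincaré--type inequality on $H_0^2(0,L)$, namely $\|\partial_x^2 u\|^2\ge(\pi/L)^2\|\partial_x u\|^2$, and the smallness condition \eqref{L} rewritten as $\tfrac{3b}{2}-\tfrac{5}{2}(L/\pi)^2\cdot(\ldots)>0$ ensures the quadratic form in $(\|u\|,\|\partial_x u\|,\|\partial_x^2 u\|)$ is coercive. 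The functional $V_2$ is designed so that $\dot V_2$ absorbs the $z(t,0,s)^2=(\partial_x^2 u(t,0))^2$ trace left over from $\dot E$, while producing $-V_2$ itself, so that one recovers $\dot V\le -2\mu V$ up to nonlinear remainders.

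The last step is to absorb the nonlinearity. The term $u^p\partial_x u$ contributes $\int_0^L x u^{p+1}\partial_x u\,dx\sim\|u\|_\infty^p\|u\|^2$ in $\dot V_1$, which by Sobolev embedding is bounded by $C\|u\|_{H^2}^p\|u\|^2$; for initial data with $\|(u_0,z_0)\|_H<r$ small, a standard bootstrap using the dissipation shows $\|u(t)\|_{H^2}$ remains small and this nonlinear term can be absorbed into the negative quadratic form. Gronwall then delivers $V(t)\le V(0)e^{-2\mu t}$, and equivalence of $V$ and $E$ gives \eqref{exp decay}. The main obstacle I anticipate is the algebraic juggling at the $\dot E+\varepsilon_1\dot V_1+\varepsilon_2\dot V_2$ stage: one has to simultaneously (i) pick $\varepsilon_1,\varepsilon_2$ so that the boundary traces at $x=0$ and the memory traces at $\rho=1$ cancel using \eqref{ab}, (ii) keep the quadratic form in $u$ coercive under \eqref{L}, and (iii) leave enough room to absorb the nonlinear correction, which is why the smallness threshold $r$ on the initial data is needed.
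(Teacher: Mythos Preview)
Your overall strategy matches the paper's: reformulate the memory via the transport variable $z$, show $E'\le0$, and build a Lyapunov functional from $E$, the $xu$ multiplier $\int_0^L xu^2$, and a weighted memory term. The paper takes $E_2(t)=|\beta|\int_0^1\!\int_{\tau_1}^{\tau_2}se^{-\delta\rho s}\lambda(s)z^2\,ds\,d\rho$ with an exponential weight rather than your linear $(1-\rho)$; either choice produces the needed $-c\|z\|_{L^2(\mathcal Q)}^2$ in the derivative, so this difference is immaterial. Two concrete problems remain.

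First, your signs in the $xu$ computation are reversed. One gets
\[
\frac{d}{dt}\int_0^L xu^2\,dx = +a\|u\|^2 - 3b\|\partial_x u\|^2 - 5\|\partial_x^2 u\|^2 + L\bigl(\partial_x^2 u(t,L)\bigr)^2 + \frac{2}{p+2}\int_0^L u^{p+2}\,dx,
\]
so the drift contribution $+a\|u\|^2$ is the bad term and the fifth-order contribution $-5\|\partial_x^2 u\|^2$ is good, not the other way around. The smallness condition \eqref{L} is exactly what is needed to absorb $+a\|u\|^2$ into $-3b\|\partial_x u\|^2$ via Poincar\'e, $\|u\|^2\le (L/\pi)^2\|\partial_x u\|^2$: it gives $aL^2/\pi^2<3b$. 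Your formula ``$\tfrac{3b}{2}-\tfrac{5}{2}(L/\pi)^2\cdot(\ldots)>0$'' does not correspond to the actual mechanism.

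Second, and this is the genuine gap, your absorption of the nonlinear term does not work as written. You bound $\int_0^L xu^{p+1}\partial_x u$ by $C\|u\|_{H^2}^p\|u\|^2$ and then assert that ``a standard bootstrap using the dissipation shows $\|u(t)\|_{H^2}$ remains small''. But the energy $E(t)$ only controls $\|u(t)\|_{L^2}$; nothing in the dissipation keeps $\|u(t)\|_{H^2}$ pointwise small. The paper groups the factors the other way:
\[
\int_0^L u^{p+2}\,dx \le \|u\|_{L^\infty}^2\int_0^L |u|^p\,dx \le L\,\|\partial_x u\|^2\cdot L^{1-p/2}\|u\|^p \le L^{2-p/2}\,r^p\,\|\partial_x u\|^2,
\]
using $H_0^1\hookrightarrow L^\infty$, H\"older, and $\|u(t)\|\le E(0)^{1/2}<r$ from the energy decay. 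The resulting term $Cr^p\|\partial_x u\|^2$ is then absorbed into $-3b\|\partial_x u\|^2$ for $r$ small enough, with no $H^2$ smallness needed.
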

The proof of this result uses an appropriate Lyapunov function, which requires the condition \eqref{L}. In turn, such a requirement can be relaxed by using a compactness-uniqueness argument \cite{ro} (see \cite{ara,luan,cajesus,vasi1,vasi2}).  The proof is based on the following outcome \cite{luan}:
\begin{lemma}\label{lem2}
Let $\ell>0$ and consider the assertion: There exist \; $\zeta \in \C$  and $\omega  \in H_0^2(\Omega)\cap H^5(\Omega)$ such that
\begin{equation*}
\begin{cases}
\begin{array}{ll}
\zeta \omega(x) +\omega'(x)+\omega'''(x)-\omega'''''(x)=0, & x \in  \Omega,  \\
\omega(x)=\omega'(x)=\omega''(x)=0, & x \in \{0,\ell\}.
\end{array}
\end{cases}
\label{len}
\end{equation*}
If $(\zeta,\omega) \in \C \times  H_0^2(\Omega)\cap H^5(\Omega)$ is solution of \eqref{len}, then
$\omega=0.$
\end{lemma}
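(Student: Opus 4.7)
The plan is to first show that the spectral parameter $\la$ must be purely imaginary, and then to carry out a Paley--Wiener / Fourier analysis that, combined with an algebraic rank computation, forces $u$ to vanish identically. Pairing \eqref{len} with $u$ in $L^2(0,L)$ yields
$$\la\,\|u\|^2 + \int_0^L u'\bar u\,dx + \int_0^L u'''\bar u\,dx - \int_0^L u'''''\bar u\,dx = 0.$$
The six boundary conditions $u=u'=u''=0$ at both $x=0$ and $x=L$ make every boundary term in successive integration by parts vanish, and one sees that each of the three integrals equals the negative of its own complex conjugate, hence is purely imaginary. Taking real parts gives $\textnormal{Re}(\la)\,\|u\|^2=0$, so it remains to treat $\la=i\beta$ with $\beta\in\R$ (otherwise $u\equiv 0$ already).

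Under $\la=i\beta$, extend $u$ by zero off $(0,L)$; the vanishing of $u,u',u''$ at the endpoints places the extension in $H^2(\R)$ with compact support $[0,L]$, so Paley--Wiener guarantees that $\hat u$ is an entire function of exponential type at most $L$ lying in $L^2(\R)$. Fourier-transforming \eqref{len} and keeping track of the four boundary contributions that survive (those coming from $u'''$ and $u''''$ at $0$ and $L$) produces
\be\label{FTplan}
P(\xi)\,\hat u(\xi) \;=\; -i\bigl[(D+i\xi B)e^{-i\xi L}-(C+i\xi A)\bigr], \qquad \xi\in\C,
\ee
where $P(\xi):=\beta+\xi-\xi^3-\xi^5$ and $A:=u'''(0),\,B:=u'''(L),\,C:=u''''(0),\,D:=u''''(L)$. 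Since $\hat u$ is entire whereas $P$ is a polynomial of degree five, the bracket in \eqref{FTplan} must vanish at each of the five roots $\xi_1,\ldots,\xi_5$ of $P$, yielding the overdetermined homogeneous linear system
\be\label{SysPlan}
(D+i\xi_k B)e^{-i\xi_k L}\;=\;C+i\xi_k A, \qquad k=1,\ldots,5,
\ee
of five complex equations in the four complex unknowns $A,B,C,D$.

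The crux, and the main obstacle, is to show that \eqref{SysPlan} has rank four for every $\beta\in\R$ and every $L>0$, so that $A=B=C=D=0$. The over-determination is morally built into the six homogeneous boundary conditions imposed on the five-dimensional solution space of a fifth-order ODE, but making this rigorous requires exhibiting a $4\times 4$ minor of the coefficient matrix whose determinant never vanishes. I would attempt this by combining Vieta's identities for the roots of $\xi^5+\xi^3-\xi-\beta=0$ with an asymptotic study of those roots as $|\beta|\to\infty$ and a complex-analytic (Rouch\'e-type) count for bounded $|\beta|$, ruling out any exceptional pair $(\beta,L)$. Once $A=B=C=D=0$ is secured, $u$ solves the linear fifth-order ODE with the five vanishing Cauchy data $u(0)=u'(0)=u''(0)=u'''(0)=u''''(0)=0$ at $x=0$, and uniqueness of the initial value problem forces $u\equiv 0$.
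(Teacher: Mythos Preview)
The paper does not supply its own proof of this lemma; in the proof of Theorem~\ref{comp} the relevant unique-continuation step is simply attributed to \cite[Lemma~4.2]{luan}. Your route---forcing $\la\in i\R$ via the energy identity, extending $u$ by zero, and running a Paley--Wiener/Fourier reduction in the style of Rosier to arrive at an overdetermined $5\times 4$ linear system in the boundary traces $(A,B,C,D)=(u'''(0),u'''(L),u''''(0),u''''(L))$---is exactly the standard machinery used in that literature, and your derivation up to that system is correct.

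The proposal has a genuine gap at the decisive step: you do not prove that the $5\times 4$ coefficient matrix has rank four; you only announce a program (Vieta's relations for the roots of $\xi^5+\xi^3-\xi-\beta$, asymptotics as $|\beta|\to\infty$, a Rouch\'e count for bounded $|\beta|$) without carrying any of it out. That algebraic/transcendental analysis is precisely where all the content lies. More seriously, your stated target---rank four for \emph{every} $\beta\in\R$ and \emph{every} $L>0$---appears to be too strong. The paper itself signals this: Theorem~\ref{comp} explicitly restricts to those $L>0$ for which the conclusion of Lemma~\ref{lem2} holds, and the surrounding discussion refers to ``critical lengths phenomena'' connected with M\"obius transforms (see also the remark that removing the drift term $a\partial_x u$ is what makes the critical-length set disappear). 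Hence for an exceptional set of lengths the system is expected to admit a nontrivial $(A,B,C,D)$, and the correct task is to characterize that set, or at least to show it is discrete, rather than to exclude it altogether. As written, your plan could not reach its stated conclusion; the endgame must be reformulated to allow for, and then describe, the exceptional lengths.
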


We have:

\begin{theorem}\label{comp}
Suppose that assumptions \ref{assK} hold. Moreover, we choose $\ell>0$ so that the problem in Lemma \ref{lem2} has only the trivial solution. Then, there exists $\varrho>0$ such that for every $\left(\omega_{0}, z_{0}\right) \in H$ satisfying
$
\left\|\left(\omega_{0}, z_{0}\right)\right\|_{H} \leq \varrho,
$
the energy \eqref{energia} of the problem \eqref{sis1} decays exponentially. 
\end{theorem}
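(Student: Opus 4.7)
The approach is a compactness–uniqueness argument (in the spirit of \cite{ro,ara,luan,cajesus,vasi1,vasi2}), which removes the geometric restriction \eqref{L} needed in Theorem \ref{Lyapunov}. From the energy identity derived during the proof of Theorem \ref{Lyapunov} one already extracts, for every solution with $\|(u_0,z_0)\|_H \leq r$, the dissipation estimate
\begin{equation*}
E(0) - E(T) \geq c_0 \int_0^T \bigl(\partial_x^2 u(t,0)\bigr)^2\, dt,
\end{equation*}
for some $c_0 > 0$ depending only on $\alpha$, $\beta$ and $\lambda$ (this part does not use the size of $L$). Combined with semigroup iteration, the exponential decay $E(t) \leq \kappa E(0) e^{-2\nu t}$ then reduces to proving an observability inequality
\begin{equation*}
E(0) \leq C_T \int_0^T \bigl(\partial_x^2 u(t,0)\bigr)^2\, dt,
\end{equation*}
for some $T>0$ and $C_T>0$, uniformly in $(u_0,z_0) \in H$ of norm at most $r$.

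\textbf{Step 1 (contradiction setup).} Suppose the observability inequality fails. Then there is a sequence $(u_0^n,z_0^n) \in H$ with $\|(u_0^n,z_0^n)\|_H \leq r$ whose solution $u^n$ satisfies $E^n(0) = \gamma_n^2 > 0$ and
\begin{equation*}
\frac{1}{\gamma_n^2} \int_0^T \bigl(\partial_x^2 u^n(t,0)\bigr)^2\, dt \longrightarrow 0.
\end{equation*}
Normalizing $v^n = u^n/\gamma_n$, one obtains solutions of a perturbed system with unit initial energy, boundary trace $\partial_x^2 v^n(\cdot,0)\to 0$ in $L^2(0,T)$, and nonlinearity of size $\gamma_n^p (v^n)^p \partial_x v^n = O(r^p)$, which will disappear in the limit.

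\textbf{Step 2 (compactness).} The standard a priori bounds for the Kawahara boundary-value problem, together with the hidden regularity of the trace $\partial_x^2 u(\cdot,0)$ used in the proof of Theorem \ref{Lyapunov}, give uniform estimates for $(v^n)$ in $L^\infty(0,T;L^2(\Omega)) \cap L^2(0,T;H_0^2(\Omega))$ and control the memory component in $L^2(\mathcal{T}\times \mathcal{M})$. The Aubin–Lions lemma then produces a subsequence converging strongly in $L^2(0,T;L^2(\Omega))$ and weakly in $L^2(0,T;H_0^2(\Omega))$ to some limit $v$. Passing to the limit in the equation, $v$ solves the linear Kawahara equation on $(0,T)\times \Omega$ with the homogeneous boundary conditions of \eqref{sis1} and the overdetermination $\partial_x^2 v(\cdot,0) \equiv 0$ on $(0,T)$; the associated memory component vanishes identically as a consequence of the trace condition and the definition of $\mathcal{F}$.

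\textbf{Step 3 (spectral reduction and conclusion).} It remains to deduce $v \equiv 0$. The Kawahara operator $A$ defined by the three homogeneous boundary conditions at $x=0$ and $x=L$ has compact resolvent in $L^2(\Omega)$. Decomposing $v$ along its (generalized) eigenfunctions and using that the trace $\partial_x^2 v(\cdot,0)$ vanishes on the whole interval $(0,T)$, one forces every mode $(\lambda,\varphi)$ appearing in the expansion to satisfy precisely the overdetermined eigenvalue problem \eqref{len}; Lemma \ref{lem2} then yields $\varphi \equiv 0$, and hence $v \equiv 0$. Combined with the strong convergence of Step 2 and with the continuity of the energy on $H$, this leads to $E^n(0) \to 0$, contradicting the normalization $E^n(0)/\gamma_n^2 = 1$. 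The main technical obstacle is precisely this spectral reduction: translating the space-time trace condition into the eigenfunction trace condition of Lemma \ref{lem2}, typically by applying the Laplace transform in $t$ and arguing by analyticity in the spectral parameter, is the heart of the compactness–uniqueness scheme; once this is in place, Assumptions \ref{assK} ensure the propagation of smallness and the semigroup conclusion closes the proof.
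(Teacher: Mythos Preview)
Your compactness--uniqueness scheme is in the right spirit but departs from the paper's actual argument and, as written, has a real gap. The paper does \emph{not} run the contradiction on the nonlinear flow. It first establishes the observability inequality \eqref{OI} for the \emph{linear} system \eqref{sis2.1_coupled}; since the sequence $(u^n,z^n)$ there already solves a linear equation, the limit is automatically a linear solution with the overdetermined trace, and Lemma~\ref{lem2} applies cleanly. This gives exponential decay of the linear semigroup with some factor $\gamma<1$ at a fixed time $T$. The passage to the nonlinear problem is then done by a direct perturbation estimate, not by another compactness argument: splitting $u=u_1+u_2$ with $u_1$ the linear evolution of the data and $u_2$ the Duhamel correction driven by $-u\partial_x u$, one gets $\|(u(T),z(T))\|_H\le \gamma\|(u_0,z_0)\|_H+C\|u\|_{L^2(0,T;H^2)}^2$, and the $xu$--multiplier identity together with Gagliardo--Nirenberg bounds the last term by $C\|(u_0,z_0)\|_H^2\bigl(1+\|(u_0,z_0)\|_H^2\bigr)$. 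Choosing $r$ small makes the total factor $\gamma+Cr+Cr^3<1$, and iteration yields the exponential decay.

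The gap in your route is the sentence ``nonlinearity of size $\gamma_n^p (v^n)^p\partial_x v^n=O(r^p)$, which will disappear in the limit.'' Your negation fixes $r$ and only lets the observability constant blow up; this gives $\gamma_n\le r$ but not $\gamma_n\to 0$. Along any subsequence with $\gamma_n\to\gamma_\infty>0$ the limit $v$ solves the \emph{nonlinear} Kawahara equation with the extra trace condition, and Lemma~\ref{lem2}---a statement about the linear spectral problem \eqref{len}---does not dispose of it. To salvage your approach you would need either a unique continuation property for the nonlinear equation (as in \cite{ara}) or a diagonal negation in which $r=r_n\to 0$ is built in; neither is supplied. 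A secondary issue: your observability right-hand side carries only $\int_0^T(\partial_x^2 u(t,0))^2\,dt$, whereas $E(0)$ contains the memory piece $\|z_0\|^2_{L^2(\mathcal{Q})}$, and vanishing of the future trace does not by itself annihilate the past history $z_0$. This is why the paper's inequality \eqref{OI} keeps the additional term $\int_0^T\int_{\tau_1}^{\tau_2}s\lambda(s)z^2(t,1,s)\,ds\,dt$ on the right and recovers $(z_0^n)$ as a Cauchy sequence through \eqref{est3}.
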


\subsection{Further comments and paper's outline} As mentioned above, the exponential stability result of the system \eqref{sis1} will be established using two different methods. The first one evokes a Lyapunov function and requires an explicit smallness condition on the length of the spatial domain $\ell$. The second one is obtained via a classical compactness-uniqueness argument, where critical lengths phenomena appear with a relation with the M\"{o}bius transforms (see for instance \cite{luan}). This permits us to answer the question raised in the introduction.

\begin{remarks}
Let us point out some important comments:
\begin{itemize}

\item[$\bullet$] Considering $\nu_2=0$ and $\alpha=0$, the authors in \cite{cajesus} showed the stabilization property for \eqref{sis1} using the compactness-uniqueness argument. Since they removed the drift term $\alpha\partial_{x}\omega$, the critical lengths phenomena did not appear.

\vspace{0.2cm}

\item[$\bullet$] The main concern of this work is to deal with the feedback law of memory type as in \eqref{fdl}. In fact, one needs to control this term to ensure well-posedness and stabilization results.

\vspace{0.2cm}

\item[$\bullet$] Our results are valid for the general nonlinearities $u^{p}\partial_{x}u$, $p\in\{1,2\}$, and also can be extended for linearity like $c_1u \partial_x u+c_2u^2 \partial_x u$. To draw more attention to the first general nonlinearity, the decay rate in \eqref{exp decay} depends on the values of $p$ since we have (see Section 3)

 $$\mu<\min\left\{\dfrac{\mu_2 |\nu_2|e^{-\delta\tau_2}\delta}{2(1+\mu_1|\nu_2|)},
 \dfrac{\mu_1}{2\ell^2(1+\ell\mu_1)(p+2)}\left[(p+2)(3\pi^2\beta-\alpha \ell^2)-2\pi^2\ell^{2-\frac{p}{2}}r^p\right]\right\}.$$
\end{itemize}
\end{remarks}

\bigskip
We end our introduction with the paper's outline:  The work consists of three parts including the Introduction. Section \ref{Sec2} discusses the existence of solutions for the full system \eqref{sis1}. Section \ref{Sec3} is devoted to proving the stabilization results, that is, Theorem \ref{Lyapunov} and  Theorem \ref{comp}. 

\section{Well-posedness theory}\label{Sec2}
In this section, we are interested in analyzing the well-posedness property of the system \eqref{sis1}. The first and the second subsections are devoted to proving the existence of solutions for the linearized (homogenous and non-homogeneous) system associated with \eqref{sis1}, respectively. The third subsection concerns the well-posedness of the full system \eqref{sis1}.
\subsection{Linear problem}
As in the literature (see for instance the references  \cite{xyl} and \cite{np}),  the homogenous linear system associated with \eqref{sis1} can be viewed as follows:
 \begin{equation}\label{sis2.1_coupled}
	\left\{
	\begin{array}{ll}
		\partial_{t} \omega(t,x)+\alpha \partial_x \omega(t,x) +\beta\partial_x^3 \omega(t,x)- \partial_x^5 \omega(t,x) =0, & (t,x) \in  \mathbb{R}^{+}\times\Omega,\\
		s\partial _tz(t,\phi,s)+\partial_\phi z(t,\phi,s)=0, & (t,\phi,s)\in \R^+\times\Omega_0\times \mathcal{M},\\
		\omega (t,0) =\omega (t,\ell) =\partial_x \omega(t,0)=\partial_x \omega(t,\ell) =0, & t >0,\\
\partial_x^2 \omega(t,\ell)=\nu_1 \partial_x^2 \omega(t,0)+	\nu_2 \displaystyle \int_{\mathcal{M}} \sigma(s)  z(t,1,s) \, ds, & t >0,\\
\omega(0,x) =\omega_{0} (x), & x \in  \Omega,\\
z(0,\phi,r)=z_0(-\phi r),& (\phi,r)\in \Omega_0\times(0,\tau_2),
	\end{array}
	\right.
\end{equation}
where $z(t,\phi,s)=  \partial_x^2\omega(t - \phi s,0)$ satisfies a transport equation (see \eqref{sis2.1_coupled}$_2$). Letting $\Lambda(t)= \left[\begin{array}{ll}
		\omega(t,\cdot) \\
	    z(t,\cdot,\cdot)
	\end{array}\right],	\Lambda_{0}= \left[\begin{array}{ll}
	\omega_{0} \\
	z_{0}(-\phi\cdot)
\end{array}\right]$, one can rewrite this system abstractly:
\begin{equation}
\label{sla}
\begin{cases}
\Lambda_t(t)=A\Lambda(t), \quad t>0,\\
\Lambda(0)=\Lambda_0\in H,
\end{cases}
\end{equation}
where
\begin{equation*}
	 A= \left[\begin{array}{lll}
		-\alpha\partial_x - \beta\partial_x^3 + \partial_x^5 & 0 \\
		0 & -\dfrac{1}{s}\partial_{\phi}
	\end{array}\right],
\end{equation*}
whose domain is given by
\begin{equation*}\label{DAA}
D(A) :=
\left\lbrace
	\begin{aligned}
		&(\omega,z)\in H, \\
		&(\omega,z)\in H^5(\Omega)\cap H_0^2(\Omega), \\
		&z  \in L^2 \Bigl(  \M; H^1(\Omega_0)  \Bigr);
	\end{aligned}
	\left\lvert
	\begin{aligned}
	&\partial_x^{2}\omega(0)= z(0,\cdot),\\
	& \partial_x^{2}\omega(\ell)= \nu_1\partial_x^{2}\omega(0) +\nu_2  \int_{\mathcal{M}} \sigma(s) z(1,s) \, ds
	\end{aligned}
	\right.
\right\rbrace.
\end{equation*}

The following result ensures the well-posedness of the linear homogeneous system.

\begin{proposition}\label{linear}
Under the assumption \eqref{assK}, we have:
\begin{itemize}
\item[i.] The operator $A$ is densely defined in $H$ and generates a $C_{0}$-semigroup of contractions $e^{t{\scriptstyle A}}$. Thereby, for each $\Lambda_0\in H$, there exists a unique mild solution $\Lambda\in C([0,+\infty),H)$ for the linear system associated with \eqref{sis1}. Moreover, if $\Lambda_0\in D(A)$, then we have a unique classical solution with the regularity $$\Lambda\in C([0,+\infty),D(A))\cap C^1([0,+\infty),H).$$
\item[ii.] Given  $\Lambda_{0}=(\omega_{0},z_0(\cdot)) \in H$, the following estimates hold:
\begin{equation}
\label{est1}
\displaystyle \| \partial_x^2 \omega (0,\cdot) \|_{L^2 (0,T)}^2 + \int_0^T \int_{\mathcal{M}} s \sigma(s)   z^2 (t,1,s) \, dsdt \leq  C  \| (\omega_0, z_0(\cdot) )\|_H^2,
\end{equation}
\begin{equation}
\label{est2}
\displaystyle \| \partial_x^2 \omega (\cdot) \|_{L^2 (0,T;L^2 (\Omega) )}^2  \leq  C  \| (\omega_0, z_0(\cdot) )\|_H^2,
\end{equation}
\begin{equation}
\label{est3}
\displaystyle \| z_0(\cdot) \|_{L^2 (\mathcal{Q})}^2  \leq    \| z (T,\cdot,\cdot) \|_{L^2 (\mathcal{Q})}^2 +   \displaystyle \int_0^T \int_{\mathcal{M}}  \sigma(s)   z^2 (t,1,s) \, dsdt,
\end{equation}
and
\begin{equation}
\label{est4}
  T\| \omega_0(\cdot) \|^2  \leq    \| \omega\|_{L^2 (0,T;L^2(\Omega))}^2 +T\|\partial^2_x \omega(0)\|_{L^2 (0,T)}^2.
\end{equation}
\item[iii.]The map $$\mathcal{G}: \Lambda_{0}=(\omega_{0},z_0(\cdot)) \in H \mapsto \Lambda (\cdot) =e^{\cdot {\scriptstyle A}} \Lambda_{0} \in Y_{T} \times C \left( [0,T]; \, L^2 (\mathcal{Q}) \right)$$ is continuous.
\end{itemize}
\end{proposition}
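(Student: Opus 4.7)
\medskip

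\noindent\textbf{Proof plan.} The plan is to treat the three items in order, relying on the Lumer--Phillips theorem for (i) and on suitable multipliers for (ii), with (iii) then following from (i)--(ii) by a density/continuity argument.

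\medskip

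\noindent\emph{Item (i): generation of a contraction semigroup.} First I would check that $D(A)$ is dense in $H$: smooth compactly supported pairs $(u,z)$ with $u$ vanishing to sufficient order at $0$ and $L$ and with $z$ arranged to satisfy the coupling $z(0,s)=u''(0)$ and the second boundary condition lie in $D(A)$ and approximate arbitrary elements of $H$. Next, the main computation is dissipativity: for $(u,z)\in D(A)$, integrating by parts in
\[
\langle A(u,z),(u,z)\rangle_{H}=\int_0^L\!\bigl(-au_x-bu_{xxx}+u_{xxxxx}\bigr)u\,dx-|\beta|\int_{\tau_1}^{\tau_2}\!\int_0^1 \lambda(s)\,z\,\partial_\rho z\,d\rho\,ds
\]
and using $u(0)=u(L)=u_x(0)=u_x(L)=0$ together with $z(0,s)=u_{xx}(0)$, the $a$- and $b$-terms vanish, while the fifth-derivative term produces $\tfrac12\bigl(u_{xx}(L)^2-u_{xx}(0)^2\bigr)$ and the transport term contributes $\tfrac{|\beta|}{2}\int_{\tau_1}^{\tau_2}\lambda(s)\bigl(u_{xx}(0)^2-z(1,s)^2\bigr)ds$. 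Substituting $u_{xx}(L)=\alpha u_{xx}(0)+\beta\int_{\tau_1}^{\tau_2}\lambda(s)z(1,s)\,ds$ and applying the Cauchy--Schwarz inequality to the cross term, Assumption \ref{assK} (specifically \eqref{ab}) produces a nonpositive quadratic form in $(u_{xx}(0), z(1,\cdot))$, so $A$ is dissipative. Finally, for surjectivity of $\lambda I-A$ with $\lambda>0$, I would solve the transport ODE $\lambda z+\tfrac{1}{s}\partial_\rho z=g$ explicitly in terms of $z(0,s)=u''(0)$, substitute into the nonlocal boundary condition at $L$, and reduce the problem to a fifth-order linear two-point boundary value problem for $u$ with coefficient depending on $\alpha,\beta,\lambda$; existence follows from a standard Lax--Milgram/elliptic argument in $H^2_0(0,L)$. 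Lumer--Phillips then yields the contraction semigroup, and the mild/classical solution statements are automatic.

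\medskip

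\noindent\emph{Item (ii): the four estimates.} These I would derive by the standard multiplier method for the Kawahara operator, first for classical solutions (i.e.\ $\Phi_0\in D(A)$) and then extended by density. Multiplying the first equation of \eqref{sis2.1_coupled} by $u$ and integrating on $(0,T)\times(0,L)$, combined with multiplying the transport equation by $|\beta|\lambda(s)z$ and integrating on $(0,T)\times\Omega_0\times\mathcal M$, and using the boundary conditions exactly as in the dissipativity computation, yields an identity of the form
\[
\tfrac12\|(u,z)(T)\|_H^2+\text{(nonnegative boundary terms in }u_{xx}(0)\text{ and }z(\cdot,1,\cdot))=\tfrac12\|(u_0,z_0)\|_H^2,
\]
which gives \eqref{est1}. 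Estimate \eqref{est2} is the ``hidden regularity'' bound and is obtained by the classical Kato-type multiplier $xu$: integrating $(\partial_t u+au_x+bu_{xxx}-u_{xxxxx})\,xu$ over $(0,T)\times(0,L)$ produces a term proportional to $\int_0^T\!\int_0^L (u_{xx})^2\,dx\,dt$, which is then controlled by the already-bounded right-hand side plus the $L^\infty_tL^2_x$ norm of $u$ coming from \eqref{est1}. Estimate \eqref{est3} is essentially free: integrating the transport equation (rewritten as $\partial_t(z^2)+\tfrac{1}{s}\partial_\rho(z^2)=0$) over $(0,T)\times\Omega_0\times\mathcal M$ weighted by $\lambda(s)$ gives the stated inequality after tracking the $\rho=1$ boundary flux and the data at $t=0$ (recalling $z(0,\rho,s)=z_0(-\rho s)$). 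Finally, \eqref{est4} is an observability-type bound obtained by multiplying the first equation by $(T-t)u$ and integrating; the boundary flux is precisely $u_{xx}(\cdot,0)$ through the same integration by parts used for dissipativity.

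\medskip

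\noindent\emph{Item (iii): continuity of $\Upsilon$.} Once (i) and (ii) are in place, continuity of $\Phi_0\mapsto\Phi(\cdot)$ into $C([0,T];H)$ is just the contraction-semigroup bound, and continuity into $L^2(0,T;H^2_0(0,L))$ (and hence into $X_T$) follows from \eqref{est1}--\eqref{est2}, while continuity into $C([0,T];L^2(\mathcal Q))$ follows from the explicit transport-type representation of $z$ combined with $z(t,0,s)=u_{xx}(t,0)\in L^2(0,T)$ by \eqref{est1}.

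\medskip

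\noindent\emph{Main obstacle.} The sharp step is the dissipativity computation: one must carefully match the boundary contribution $\tfrac12\bigl(u_{xx}(L)^2-u_{xx}(0)^2\bigr)$ from the fifth-derivative integration by parts against the boundary flux generated by the transport term, and then invoke \eqref{ab} via a weighted Cauchy--Schwarz inequality to absorb the cross terms. Everything else—surjectivity, multiplier estimates, and the final continuity statement—is then routine given this algebraic miracle enforced by Assumption \ref{assK}.
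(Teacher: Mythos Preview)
Your plan is correct and matches the paper's strategy in all essential respects: dissipativity of $A$ via the boundary algebra under \eqref{ab}, then multipliers $u$, $xu$, $\lambda(s)z$, and $(T-t)u$ for the four estimates, with (iii) a consequence. Two points of comparison are worth noting. First, for item~(i) the paper does \emph{not} verify the range condition directly; instead it computes the adjoint $A^*$ explicitly and shows that $A^*$ is dissipative by the same $2\times2$-matrix argument, invoking the corollary of Lumer--Phillips that dissipativity of both $A$ and $A^*$ (with dense domains) suffices. Your proposed route---solve the transport ODE, substitute into the boundary relation, and reduce to a two-point BVP---is legitimate, but ``standard Lax--Milgram in $H^2_0$'' is too loose for a fifth-order operator with a nonlocal boundary coupling; you would need either a Galerkin argument or, more simply, to fall back on the $A^*$-dissipativity trick the paper uses. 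Second, for \eqref{est1} the paper does not read it off the energy identity directly as you propose; it first multiplies the transport equation by $\rho\lambda(s)z$ to bound $\int_0^T\!\int_{\tau_1}^{\tau_2} s\lambda(s)z^2(t,1,s)\,ds\,dt$ in terms of $\|(u_0,z_0)\|_H^2$, and then multiplies the PDE by $u$ alone and uses Young's inequality on $(\partial_x^2 u(t,L))^2$ to extract $\|\partial_x^2 u(\cdot,0)\|_{L^2(0,T)}^2$. Your one-shot energy-identity approach also works (the negative-definiteness of the boundary quadratic form under \eqref{ab} is exactly what gives coercivity in both $\partial_x^2 u(t,0)$ and $z(t,1,\cdot)$), and is arguably cleaner; the paper's two-step version has the minor advantage of isolating the transport contribution, which it reuses verbatim in the non-homogeneous case.
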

\begin{proof}
\noindent\textbf{Proof of item i.}  This part can be proved by using the semigroup theory. In fact, note first that for given $\Lambda=(\omega,z)\in D(A),$ it follows from the Cauchy-Schwarz inequality that
\begin{equation}
\label{dcs}
\int_{\mathcal{M}}\sigma(s)z(1,s)ds\leq \left(\int_{\mathcal{M}}\sigma(s)ds\right)^\frac{1}{2}\left(\int_{\mathcal{M}}\sigma(s)(z(1,s))^2ds\right)^\frac{1}{2}.
\end{equation}
Thus, using integration by parts and \eqref{dcs} yields that
\begin{equation}
\label{dissipatividade_de_A}
\begin{split}
 \langle A\Lambda, \Lambda\rangle=&\dfrac{1}{2}\left[\left(\nu_1\partial_x^2 \omega(0)+\nu_2\int_{\mathcal{M}}\sigma(s)z(1,s)~ ds\right)^2-\left(\partial_x^2 \omega(0)\right)^2 \right.\\
& \left.-|\nu_2|\int_{\mathcal{M}}\sigma(s)\left(z(1,s)\right)^2~ ds+|\nu_2|\left(\partial_x^2 \omega(0)\right)^2\int_{\mathcal{M}}\sigma(s)~ ds\right]\\
\leq&\dfrac{1}{2}\left[\left(\partial_x^2 \omega(0)\right)^2\left(\nu_1^2-1+|\nu_2|\int_{\mathcal{M}}\sigma(s)~ ds\right)\right.\\
&\left.+2\nu_1\nu_2\left(\partial_x^2 \omega(0)\right)\left(\int_{\mathcal{M}}\sigma(s)z(1,s)~ ds\right)\right.\\
& \left.+\left(\nu_2^2-\dfrac{|\nu_2|}{\|\sqrt{\sigma(s)}\|^2}\right)\left(\int_{\mathcal{M}}\sigma(s)z(1,s)~ ds\right)^2\right]
=\dfrac{1}{2}\langle G X,X \rangle _{\R^2},
\end{split}
\end{equation}
where $$X=\left(\begin{array}{c}
\partial_x^2 \omega(0)\\
\displaystyle \int_{\mathcal{M}}\sigma(s)z(1,s)~ ds
\end{array}\right)$$
and
$$G=\left(\begin{array}{cc}
\displaystyle \nu_1^2-1+|\nu_2|\displaystyle \int_{\mathcal{M}}\sigma(s)~ ds &\nu_1\nu_2\\
\displaystyle \nu_1\nu_2&\displaystyle \nu_2^2-\dfrac{|\nu_2|}{\|\sqrt{\sigma(s)}\|^2}
\end{array}\right).$$
Due to \eqref{ab}, we have
$$\det G=|\nu_2|\left(\int_{\mathcal{M}}\sigma(s)~ ds\right)^{-1}\left\{\left[1-|\nu_2|\left(\int_{\mathcal{M}}\sigma(s)~ ds\right)\right]^2-\nu_1^2\right\}>0$$
and
$$
\tr G\leq |\nu_1|(|\nu_1|-1)-|\nu_1||\nu_2|\left(\int_{\mathcal{M}}\sigma(s)~ ds\right)^{-1}<0,
$$
since $|\nu_1|<1.$ Moreover, is not difficult to see that $G$ is a negative definite matrix. Putting these previous information together in \eqref{dissipatividade_de_A} we have that $A$ is dissipative. Analogously, considering the adjoint operator of $A$ as follows
$$A^*(v,y)=\left(\alpha\partial_x v+\beta \partial_x^3 v-\partial_x^5 v, \dfrac{1}{s}\partial_\phi y\right)$$ with domain
\begin{equation*}\label{DAAA}
D(A^*) :=
\left\lbrace
	\begin{aligned}
		&(v,y)\in H, \\
		& (\omega,z)\in H^5(\Omega)\cap H_0^2(\Omega), \\
		&y  \in L^2 \Bigl(  \M; H^1(\Omega_0)  \Bigr);
	\end{aligned}
	\left\lvert
	\begin{aligned}
	& \partial_x^2 v(\ell)=\dfrac{|\nu_2|}{\nu_2}y(1,s),\\
	& \partial_x^2 v(0)=\displaystyle \nu_1\partial_x^2 v(\ell) +|\nu_2|\int_{\mathcal{M}}\sigma(s)y(0,s)~ ds
	\end{aligned}
	\right.
\right\rbrace,
\end{equation*}
we have that for $(v,y)\in D(A^*),$
 \begin{equation}
 \label{dissipatividade_adjunto}
 \begin{split}
 \langle A^*(v,y),(v,y)\rangle
& +\left[|\nu_2|^2-|\nu_2|\|\sqrt{\sigma}\|^2_{L^2(\M)}\right]\left(\int_{\mathcal{M}}\sigma(s) y(0,s) ds\right)^2\\
=& \dfrac{1}{2}\langle G_*Z,Z\rangle,
 \end{split}
 \end{equation}
 where $$Z=\left(\begin{array}{c}  \partial_x^2 v(\ell)\\
 \displaystyle \int_{\mathcal{M}}\sigma(s) y(0,s) ds
 \end{array}\right)$$ and
 $$G_*=\left(\begin{array}{cc}
 \nu_1^2-1+|\nu_2|\displaystyle \int_{\mathcal{M}}\sigma(s)~ ds &\nu_1|\nu_2|\\
 \nu_1|\nu_2|&\displaystyle \nu_2^2-\dfrac{|\nu_2|}{\|\sqrt{\sigma(s)}\|^2}
\end{array}\right).$$
Again, thanks to the relation \eqref{ab}, we have $\det G_*=\det G>0$ and $ \tr G_*= \tr G<0$, since $|\nu_1|<1.$ Thus, using the fact that $G_*$ is negative definite in \eqref{dissipatividade_adjunto}, we have that $A^*$ is also dissipative, showing the item i.

\vspace{0.2cm}
\noindent\textbf{Proof of item ii.}  First, remember that $e^{tA}$ is a contractive  semigroup and therefore, for each $\Lambda_0=(\omega_0,z_0)\in H,$ the following estimate is valid
\begin{equation}
\label{contration_semigroup}
\|(\omega(t),z(t,\cdot,\cdot))\|_H^2=\|\omega(t)\|^2+\|z(t,\cdot,\cdot) \|^2_{L^2(\mathcal{Q})}\leq \|\omega_0\|^2+\|z_0(-\cdot) \|^2_{L^2(\mathcal{Q})}, \forall t\in [0,T].
\end{equation}
Moreover, the following inequality holds
\begin{equation}
\label{eq2.16}
\begin{array}{rcl}
\displaystyle \int_0^T\int_{\mathcal{M}}s\sigma(s)\left[z(t,1,s)\right]^2~dsdt&\leq&\displaystyle \dfrac{\tau_2}{|\nu_2|}\int_{\Omega_0}\int_{\mathcal{M}}|\nu_2| s \sigma(s)\left[z_0^2(-\phi s)\right]dsd\phi\\
&&\displaystyle +\dfrac{\tau_2}{\tau_1|\nu_2|}\int_0^T\int_{\Omega_0}\int_{\mathcal{M}}|\nu_2| s \sigma(s)z^2~ds d\phi dt.
\end{array}
\end{equation}
Indeed, multiplying the second equation of \eqref{sis2.1_coupled} by $\phi \sigma(s)z,$ rearranging the terms, integrating by parts and taking into account that $s \in \M=(\tau_1,\tau_2),$ we have
\begin{equation*}
\begin{split}
 \int_0^T\int_{\mathcal{M}}s\sigma(s) \left( z(t,1, s)\right)^2~ds dt
 \leq&\dfrac{\tau_2}{|\nu_2|}\int_0^T\int_{\Omega_0}\int_{\mathcal{M}}|\nu_2|\sigma(s) \left( z(t,\phi, s)\right)^2~ds d\phi dt\\
& +\dfrac{\tau_2}{|\nu_2|}\int_{\Omega_0}\int_{\mathcal{M}}\phi |\nu_2|\sigma(s)s\left( z(0,\phi, s)\right)^2~ds d\phi\\
&-\dfrac{\tau_2}{|\nu_2|}\int_{\Omega_0}\int_{\mathcal{M}}|\nu_2|\phi \sigma(s)s\left( z(T,\phi, s)\right)^2~ds d\phi\\
\leq& \dfrac{\tau_2}{\tau_1|\nu_2|}\int_0^T\int_{\Omega_0}\int_{\mathcal{M}}s|\nu_2|\sigma(s) \left( z(t,\phi, s)\right)^2~ds d\phi dt\\
& +\dfrac{\tau_2}{|\nu_2|}\int_{\Omega_0}\int_{\mathcal{M}}\phi |\nu_2|\sigma(s)s\left( z_0(-\phi s)\right)^2~ds d\phi
\end{split}
\end{equation*}
This proves the estimate \eqref{eq2.16}. As a consequence of \eqref{contration_semigroup}, \eqref{eq2.16} and the hypothesis of $\tau_1\leq s\leq \tau_2$ and $\phi\leq 1$, we also have
\begin{equation}
\label{eq2.15}
\int_0^T\int_{\mathcal{M}} s\sigma(s)\left(z(t,1,s)\right)^2~dsdt\leq \dfrac{\tau_2}{|\nu_2|}\left(\dfrac{T}{\tau_1}+1\right)\left(\|\omega_0\|^2+\| z_0(-\phi s)\|^2_{L^2(\mathcal{Q})}\right).
\end{equation}

Now, we are in a position to prove \eqref{est1}. Multiplying the first equation of  \eqref{sis2.1_coupled} by $\omega$, integrating over $[0,T]\times[0,\ell],$ and using the boundary conditions, it follows that

\begin{equation}\label{eq2.23}
\begin{split}
\|\partial_x^2\omega(0)\|_{L^2(0,T)}^2=&\displaystyle \|\omega_0\|^2+\int_0^T\left(\partial_x^2\omega(\ell)\right)^2~dt-\|\omega(T)\|^2\\
\leq& \|\omega_0\|^2+\int_0^T\left(\nu_1\partial_x^2\omega(0)+\nu_2\int_{\mathcal{M}}
\sigma(s)z(\cdot,1, s)ds \right)^2~dt:=I_{1}+I_{2}.
\end{split}
\end{equation}
To estimate the integral $I_{2}$ on the right-hand side of \eqref{eq2.23}, we use Young's inequality together with the Cauchy-Schwartz inequality, to obtain
\begin{equation}
\label{eq2.25}
\begin{split}
I_{2}\leq&\nu_1^2\left(\partial_x^2 \omega(t,0)\right)^2\\
&+2|\nu_1||\nu_2|\left(\partial_x^2 \omega(t,0)\right)\left(\int_{\mathcal{M}}\sigma(s)ds\right)^\frac{1}{2}\left(\int_{\mathcal{M}}\sigma(s)z^2(\cdot,1, s)ds\right)^\frac{1}{2}\\
& +\nu_2^2\left(\left(\int_{\mathcal{M}}\sigma(s)ds\right)^\frac{1}{2}
\left(\int_{\mathcal{M}}\sigma(s)z^2(\cdot,1, s)ds\right)^\frac{1}{2}\right)^2\\
\leq&  \left[\nu_1^2+\dfrac{\nu_2^2}{2\theta}\left(\int_{\mathcal{M}}\sigma(s)ds\right)\right]\left(\partial_x^2 \omega(t,0)\right)^2\\
&+\left[2\theta \nu_1^2+\nu_2^2\left(\int_{\mathcal{M}}\sigma(s)ds\right)\right]\left(\int_{\mathcal{M}}\sigma(s)z^2(\cdot,1, s)ds\right).
\end{split}
\end{equation}
Thereafter, inserting \eqref{eq2.25} into \eqref{eq2.23}, we find
\begin{equation}
\label{eq2.26}
\begin{split}
&\left[1-\nu_1^2-\dfrac{\nu_2^2}{2\theta}\left(\int_{\mathcal{M}}\sigma(s)ds\right)\right]\|\partial_x^2\omega(0)\|_{L^2(0,T)}^2\leq \|\omega_0\|^2 \\&+\left[2\theta\nu_1^2+\nu_2^2\left(\int_{\mathcal{M}}\sigma(s)ds\right)\right]\left(\int_0^T\int_{\mathcal{M}}\sigma(s)z^2(\cdot,1, s)dsdt\right).
\end{split}
\end{equation}
Thanks to \eqref{ab}, one can choose $\theta>0$ large enough so that
\begin{equation}\label{w1}
\displaystyle 1-\nu_1^2-\dfrac{\nu_2^2}{2\theta}\left(\int_{\mathcal{M}}\sigma(s)ds\right)>0.
\end{equation}
This, together with \eqref{eq2.26} and \eqref{eq2.15}, yields
\begin{equation}
\label{eq2.28}
\begin{split}
 \|\partial_x^2\omega(0)\|_{L^2(0,T)}^2\leq \leq&  \displaystyle C\left(\|\omega_0\|^2+\dfrac{1}{\tau_1}\int_0^T\int_{\mathcal{M}}s\sigma(s)z^2(\cdot,1, s)dsdt\right)\\
%\leq&   C\left(\|\omega_0\|^2+\dfrac{\tau_2}{\tau_1|\nu_2|}\left(\dfrac{T}{\tau_1}+1\right)\left(\|\omega_0\|^2+\|z_0(-\phi s)\|^2_{L^2(\mathcal{Q})}\right)\right)\\
\leq&   C\left(1+\dfrac{\tau_2}{\tau_1|\nu_2|}\left(\dfrac{T}{\tau_1}+1\right)\right)\|\omega_0\|^2+\dfrac{C\tau_2}{\tau_1|\nu_2|}\left(\dfrac{T}{\tau_1}+1\right)\|z_0(-\phi s)\|^2_{L^2(\mathcal{Q})}\\
\leq&  \displaystyle C\left(\|\omega_0\|^2+\|z_0(-\phi s)\|^2_{L^2(\mathcal{Q})}\right).
\end{split}
\end{equation}
Clearly, combining \eqref{eq2.15} and \eqref{eq2.28}, we get \eqref{est1}.

Now, let us prove \eqref{est2}. Multiplying the equation \eqref{sis2.1_coupled} by $xu$, integrating by parts over $(0,T)\times \Omega,$ and isolating the term $\|\partial_x^2 \omega\|_{L^2(0,T;L^2(\Omega))}^2$, we obtain
\begin{equation*}
\begin{split}
  \|\partial_x^2 \omega\|_{L^2(0,T;L^2(\Omega))}^2
\leq&  \int_{\Omega} \dfrac{x}{5}\omega_0^2(x)dx+\dfrac{\alpha}{5}\| \omega\|_{L^2(0,T;L^2(\Omega))}^2 \\
& +\dfrac{\ell}{5}\left[\nu_1^2+\dfrac{\nu_2^2}{2\epsilon}\left(\int_{\mathcal{M}}\sigma(s)ds\right)\right]\int_0^T(\partial_x^2\omega(t,0))^2\\
& +\dfrac{\ell}{5}\left[2\epsilon\nu_1^2+\nu_2^2\left(\int_{\mathcal{M}}\sigma(s)ds\right)\right]\int_0^T\int_{\mathcal{M}}\sigma(s)z^2(t,1,s)dsdt\\
\leq&  \dfrac{\ell}{5}\|\omega_0\|^2+\dfrac{\alpha}{5}\| \omega\|_{L^2(0,T;L^2(\Omega))}^2 \\
&+C_1\left[\int_0^T(\partial_x^2\omega(t,0))^2+\int_0^T\int_{\mathcal{M}}\sigma(s)z^2(t,1,s)dsdt\right],
\end{split}
\end{equation*}
where \eqref{eq2.25} is used and
$$ C_1=\max\left\{\dfrac{\ell}{5}\left[\nu_1^2+\dfrac{\nu_2^2}{2\epsilon}\left(\int_{\mathcal{M}}\sigma(s)ds\right)\right],\dfrac{\ell}{5}\left[2\epsilon\nu_1^2+\nu_2^2\left(\int_{\mathcal{M}}\sigma(s)ds\right)\right]\right\}.$$ Now, taking into account the fact that $e^{At}$ is a  semigroup of contractions and using  \eqref{est1}, we obtain \eqref{est2} with the constant $C=\max\left\{\dfrac{\ell}{5},\dfrac{\alpha}{5}, C_1\right\}$.

Finally, let us show \eqref{est3} and \eqref{est4}, respectively. For \eqref{est3}, multiply the second equation in \eqref{sis2.1_coupled} by $\sigma(s)z$ and integrates by parts over  $(0,T)\times\mathcal{Q},$ to obtain
$$
\int_{\Omega_0}\int_{\mathcal{M}}s\sigma(s) z^2(0,\phi, s) ~ds d\phi\leq \int_{\Omega_0}\int_{\mathcal{M}}s\sigma(s) z^2(T,\phi, s) ~ds d\phi +\int_0^T\int_{\mathcal{M}}\sigma(s) z^2(t,1,s) ~dsdt,
$$
showing \eqref{est3}. To prove \eqref{est4}, we multiply the first equation in \eqref{sis2.1_coupled} by $2(T-t)\omega$ and integrating over $[0,T]\times[0,\ell],$ to find
$$T\|\omega_0\|^2\leq T\|\omega\|^2_{L^2(0,T;L^2(\Omega))}+T\int_0^T\left(\partial_x^2\omega(0)\right)^2dt,$$
giving \eqref{est4}. Last but not least, it is worth mentioning that the above estimates remain true for solutions stemming from $\Lambda_0\in H,$ giving item ii.

\vspace{0.2cm}
\noindent\textbf{Proof of item iii.} Follows directly from \eqref{est2} and from \eqref{contration_semigroup}.
\end{proof}
\subsection{Non-homogeneous problem}
Let us now consider the linear system \eqref{sis2.1_coupled} with a source term $f \in L^1(0,T; L^2(\Omega))$ in the right-hand side of the first equation. As done in the previous subsection, the system can be rewritten as follows:
\begin{equation}\label{sis_coupled_nh}
\begin{cases}
\Lambda_t(t)=A\Lambda(t)+(\varphi(t,\cdot),0), \quad t>0,\\
\Lambda(0)=\Lambda_0\in H,
\end{cases}
\end{equation}
where $\Lambda=(\omega,z)$ and $\Lambda_ 0=(\omega_ 0,z_ 0(-\cdot)).$ With this in hand, the following result will be proved.
\begin{theorem}
\label{teor2}
Under the assumption \eqref{assK}, it follows that:
\begin{enumerate}
\item[$(a)$] If $\Lambda_0=(\omega_0,z_0(-\cdot))\in H$ and $\varphi \in L^1(0,T; L^2(\Omega)),$ then there exists a unique mild solution $$\Lambda=(\omega,z)\in Y_{T}\times C([0,T];L^2(\mathcal{Q}))$$ of \eqref{sis_coupled_nh} such that
\begin{equation}
\label{eq2.38} \|(\omega,z)\|_{C([0,T];H)}^2\leq C\left(\|(\omega_0,z_0(-\cdot))\|_H^2+\|\varphi\|_{L^1(0,T; L^2(\Omega))}^2\right),
\end{equation}
and
\begin{equation}
\label{M} \|\omega\|_{Y_{T}}^2\leq C\left(\|(\omega_0,z_0(-\cdot))\|_H^2+\|\varphi\|_{L^1(0,T; L^2(\Omega))}^2\right),
\end{equation}
for some constant $C>0$, which is independent of $\Lambda_0$ and $\varphi.$
\item[$(b)$] Given $$\omega\in Y_{T}=C(0,T;L^2(\Omega))\cap L^2(0,T;H^2_0(\Omega))$$ and $p\in\{1,2\}$, we have $\omega^p\partial_x \omega\in L^1(0,T; L^2(\Omega))$ and the map
\begin{equation}
\label{Theta}\mathcal{F}: \omega\in Y_{T} \mapsto \omega^p\partial_x \omega\in L^1(0,T; L^2(\Omega))
\end{equation}
is continuous.
\end{enumerate}
\end{theorem}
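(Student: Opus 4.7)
The strategy is to combine Proposition \ref{linear} with Duhamel's formula for part (a), and to invoke one-dimensional Gagliardo--Nirenberg interpolation for part (b).

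For part (a), since $A$ generates a $C_0$-semigroup of contractions on $H$, the variation of constants formula
$$\Phi(t) = e^{tA}\Phi_0 + \int_0^t e^{(t-\sigma)A}(f(\sigma),0)\, d\sigma$$
defines the unique mild solution $\Phi \in C([0,T];H)$. The contractivity of the semigroup and Minkowski's integral inequality yield
$$\|\Phi(t)\|_H \leq \|\Phi_0\|_H + \int_0^t \|f(\sigma)\|\, d\sigma,$$
which gives \eqref{eq2.38}. In particular the $\max_{t\in[0,T]}\|u(t)\|$ part of $\|u\|_{X_T}$ is under control, so it remains to estimate $\int_0^T \|\partial_x^2 u\|_{L^2(\Omega)}^2\, dt$ (an equivalent norm on $H_0^2(0,L)$). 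I would revisit the multiplier computation that produced \eqref{est2} in Proposition \ref{linear}, now multiplying the forced equation by $xu$ and integrating over $(0,T)\times(0,L)$. The only genuinely new contribution is $\int_0^T\!\!\int_0^L xu(t,x)f(t,x)\,dx\,dt$, controlled by Cauchy--Schwarz and Young's inequality in terms of $L\|u\|_{C([0,T];L^2)}\|f\|_{L^1(0,T;L^2)}$ and then absorbed using \eqref{eq2.38}. The boundary terms at $x=0$ and $x=L$ that reappear are handled exactly as in Proposition \ref{linear}, invoking \eqref{est1} and \eqref{eq2.15} adapted to include the forcing. A density argument legitimizes the computation: first take $\Phi_0 \in D(A)$ and $f \in C^1([0,T];L^2)$ so $\Phi$ is a classical solution, perform the multiplier estimates, then pass to the limit in the closed inequality.

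For part (b), the key ingredients are the one-dimensional interpolation bounds for $w \in H_0^2(0,L)$,
$$\|w\|_{L^\infty(\Omega)} \leq C \|w\|_{L^2}^{3/4}\|\partial_x^2 w\|_{L^2}^{1/4}, \qquad \|\partial_x w\|_{L^2} \leq C \|w\|_{L^2}^{1/2}\|\partial_x^2 w\|_{L^2}^{1/2},$$
which, applied pointwise in $t$ and combined through $\|u^p \partial_x u\|_{L^2} \leq \|u\|_{L^\infty}^p \|\partial_x u\|_{L^2}$, produce
$$\|u^p \partial_x u\|_{L^2(\Omega)} \leq C \|u\|_{L^2}^{(3p+2)/4}\|\partial_x^2 u\|_{L^2}^{(p+2)/4}.$$
For $p \in [1,2]$ the exponent $\alpha := (p+2)/4 \in [3/4,1]$ lies below $2$, so Hölder's inequality in time yields
$$\|u^p \partial_x u\|_{L^1(0,T;L^2)} \leq C\, T^{1-\alpha/2}\, \|u\|_{C([0,T];L^2)}^{(3p+2)/4}\, \|\partial_x^2 u\|_{L^2(0,T;L^2)}^{\alpha},$$
which is finite on $X_T$. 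For continuity of $\Theta$, I would use the identity
$$u^p \partial_x u - v^p \partial_x v = u^p \partial_x(u-v) + (u^p - v^p)\partial_x v$$
together with $|u^p - v^p| \leq p(|u|^{p-1}+|v|^{p-1})|u-v|$ and the same Gagliardo--Nirenberg bounds, so that the resulting $L^1(0,T;L^2)$ estimate is Lipschitz on bounded balls of $X_T$.

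I expect the main obstacle to be the rigorous justification of the $xu$-multiplier estimate for the non-homogeneous problem when $f \in L^1(0,T;L^2(\Omega))$ is merely integrable in time: the identity is formal and needs a density argument coupled with uniform bounds allowing the limit, while ensuring that the boundary terms---which carry the nonlocal memory coupling through $z(t,1,s)$---remain controlled by \eqref{est1} and \eqref{eq2.15}. A secondary care is needed when tracking constants in the Gagliardo--Nirenberg step at the endpoint $p=2$, where $\alpha=1$ and one just barely stays in the admissible Hölder range, so that the estimate remains uniform across the closed interval $[1,2]$.
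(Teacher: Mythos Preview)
Your proposal is correct, and part (a) follows the paper's own argument almost exactly: Duhamel plus contractivity for \eqref{eq2.38}, then the $xu$-multiplier identity with the forcing term absorbed via Cauchy--Schwarz/Young, and the boundary contributions controlled by redoing the analogues of \eqref{est1} and \eqref{eq2.15} in the presence of $f$. The paper carries this out explicitly (first the $\rho\lambda(s)z$ multiplier, then the $u$ multiplier to bound $\|\partial_x^2 u(\cdot,0)\|_{L^2(0,T)}$, then the $xu$ multiplier), but your outline captures the same chain.

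For part (b) your route genuinely differs. The paper treats only the endpoints $p=1$ and $p=2$, using the crude embedding $H^2_0(\Omega)\hookrightarrow L^\infty(\Omega)$ to get bounds of the form $\|u\partial_x u\|_{L^1_tL^2_x}\lesssim\|u\|_{X_T}^2$ and $\|u^2\partial_x u\|_{L^1_tL^2_x}\lesssim\|u\|_{X_T}^3$, and then asserts that intermediate $p\in(1,2)$ follow ``by density''. Your Gagliardo--Nirenberg interpolation $\|u^p\partial_x u\|_{L^2(\Omega)}\lesssim\|u\|_{L^2}^{(3p+2)/4}\|\partial_x^2 u\|_{L^2}^{(p+2)/4}$ with H\"older in time handles the full range $p\in[1,2]$ directly and uniformly, which is cleaner and avoids the somewhat vague density step. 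The paper's approach is slightly quicker at the endpoints but yours is more transparent and gives explicit control of the constants as $p$ varies.
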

\begin{proof}
\vspace{0.2cm}
\noindent\textbf{Proof of item (a).}  Since $A$ is the infinitesimal generator of a semigroup of contractions $e^{tA}$ and $\varphi\in L^1(0,T;L^2(\Omega))$ it follows from semigroups theory that there is a unique mild solution $\Lambda=(\omega,z)\in C([0,T];H)$ of \eqref{sis_coupled_nh} such that
$$\Lambda(t)=e^{tA}\Lambda_0+\int_0^te^{(t-s)A}(\varphi,0)ds$$
and hence, we get
$$\|(\omega,z)\|_{C([0,T];H)}\leq C\left(\|(\omega_0,z_0(-\cdot))\|_H+\|\varphi\|_{L^1(0,T; L^2(\Omega))}\right).$$
Young's inequality gives
$$ \|(\omega,z)\|_{C([0,T];H)}^2\leq   2C^2\left(\|(\omega_0,z_0(-\cdot))\|_H^2+\|\varphi\|_{L^1(0,T; L^2(\Omega))}^2\right),$$
which proves \eqref{eq2.38}.  To complete the proof of item $(a)$, we must verify the validity of \eqref{M}. For this, observe that from \eqref{eq2.38}, we have
\begin{equation}
\label{eq2.39}
\max_{t\in[0,T]}\|\omega\|^2\leq  2C^2\left(\|(\omega_0,z_0(-\cdot))\|_H^2+\|\varphi\|_{L^1(0,T; L^2(\Omega))}^2\right).
\end{equation}
In turn, if we multiply the second equation in \eqref{sis_coupled_nh} by $\phi\sigma(s)z$, integrating over $[0,T]\times [0,1]\times [\tau_1,\tau_2 ]$ and arguing as for the proof of \eqref{eq2.16}, we obtain
\begin{equation}
\label{eq2.42}
\begin{array}{l}
\displaystyle\int_0^T\int_{\mathcal{M}} s\sigma(s)\left(z(t,1,s)\right)^2~dsdt\\
\leq\displaystyle \dfrac{\tau_2}{|\nu_2|}\left(\dfrac{T}{\tau_1}+1\right)\left(\|\omega_0\|^2+\| z_0(-\phi s)\|^2_{L^2(\mathcal{Q})}+\|\varphi\|_{L^1(0,T; L^2(\Omega))}^2\right).
\end{array}
\end{equation}
Now, multiplying the first equation in \eqref{sis_coupled_nh} by $\omega$, integrating over $[0,T]\times[0,\ell],$  and thanks to \eqref{eq2.42}, we get
\begin{equation}
\label{eq2.46}
\begin{split}
\|\partial_x^2\omega(0)\|_{L^2(0,T)}^2\leq& \|\omega_0\|^2+\int_0^T\left(\nu_1\partial_x^2\omega(0)+\nu_2\int_{\mathcal{M}}\sigma(s)z(\cdot,1, s)ds \right)^2~dt\\
& +2\left(\max_{t\in [0,T]}\|\omega(t,x)\|\right) \int_0^T\|\varphi(t,x)\|~dt.
\end{split}
\end{equation}
Now, replacing \eqref{eq2.25} in \eqref{eq2.46}, we find
\begin{equation}
\begin{array}{rcl}
\|\partial_x^2\omega(0)\|_{L^2(0,T)}^2&\leq& \displaystyle \|\omega_0\|^2+\left[\nu_1^2+\dfrac{\nu_2^2}{2\theta}
\left(\int_{\mathcal{M}}\sigma(s)ds\right)\right]\int_0^T\left(\partial_x^2 \omega(t,0)\right)^2 dt\\
&&\displaystyle +\left[2 \theta \nu_1^2+\nu_2^2\left(\int_{\mathcal{M}}\sigma(s)ds\right)\right]\left(\int_0^T\int_{\mathcal{M}}\sigma(s)z^2(\cdot,1, s)dsdt\right)\\
&&\displaystyle +2\left(\max_{t\in [0,T]}\|\omega(t,x)\|\right) \int_0^T\|\varphi(t,x)\|~dt.
\end{array}
\end{equation}
Isolating $\|\partial_x^2\omega(0)\|_{L^2(0,T)}^2$ and using Young's inequality for the last term of the right-hand side, we reach
\begin{equation}
\label{eq2.48}
\begin{array}{l}
\displaystyle \left[1-\nu_1^2-\dfrac{\nu_2^2}{2\theta}\left(\int_{\mathcal{M}}\sigma(s)ds\right)\right]\|\partial_x^2\omega(0)\|_{L^2(0,T)}^2\\
\leq \displaystyle \|\omega_0\|^2 +\left[2\theta \nu_1^2+\nu_2^2\left(\int_{\mathcal{M}}\sigma(s)ds\right)\right]
\left(\int_0^T\int_{\mathcal{M}}\sigma(s)z^2(\cdot,1, s)dsdt\right)\\[3mm]
\displaystyle\ \ \ \  +\left(\max_{t\in [0,T]}\|\omega(t,x)\|\right)^2+\|\varphi\|_{L^1(0,T;L^2(\Omega))}^2.
\end{array}
\end{equation}
Thanks to \eqref{ab}, \eqref{w1} and \eqref{eq2.48}, the estimate \eqref{eq2.38} becomes
\begin{equation}\label{eq2.50}
\begin{split}
 \|\partial_x^2\omega(0)\|_{L^2(0,T)}^2
 \leq& C_1\left(2+C_2+\dfrac{\tau_2}{\tau_1|\nu_2|}\left(\dfrac{T}{\tau_1}+1\right)\right)\|\omega_0\|^2\\
   & +C_1\left(\dfrac{\tau_2}{\tau_1|\nu_2|}\left(\dfrac{T}{\tau_1}+1\right)+1+C_2\right)\|z_0(-\phi s)\|^2_{L^2(\mathcal{Q})}\\
   & +C_1(1+C_2)\|\varphi\|_{L^1(0,T;L^2(\Omega))}^2\\
 \leq&  C\left(\|(\omega_0,z_0(-\phi s))\|^2_{H}+\|\varphi\|_{L^1(0,T;L^2(\Omega))}^2\right).
\end{split}
\end{equation}
Now, multiply the equation \eqref{sis_coupled_nh} by $xu$ and integrate by parts over $(0,T)\times (0,\ell ) $ and then perform similar calculations to those of the previous item to get
\begin{equation}
\label{eq2.53}
\begin{split}
&\dfrac{5}{2}\|\partial_x^2 \omega\|_{L^2(0,T;L^2(\Omega))}^2\leq \displaystyle \dfrac{\ell}{2}\|\omega_0\|^2+\dfrac{aT}{2}C\left(\|(\omega_0,z_0(-\phi s))\|_{H}^2+\|\varphi\|^2_{L^1(0,T;L^2(\Omega))}\right)\\
& +\dfrac{\ell}{2}C\left(\|(\omega_0,z_0(-\phi s))\|_{H}^2+\|\varphi\|^2_{L^1(0,T;L^2(\Omega))}\right)+\dfrac{\ell}{2}\|\varphi\|_{L^1(0,T;L^2(\Omega))}^2\\
& +\dfrac{\ell}{2}\left[\nu_1^2+\dfrac{\nu_2^2}{2\epsilon}\left(\int_{\mathcal{M}}\sigma(s)ds\right)\right]C\left(\|(\omega_0,z_0(-\phi s))\|_{H}^2+\|\varphi\|^2_{L^1(0,T;L^2(\Omega))}\right) \\
& +\dfrac{\ell}{2\tau_1}\left[2\epsilon\nu_1^2+\nu_2^2\left(\int_{\mathcal{M}}\sigma(s)ds\right)\right]\dfrac{\tau_2}{|\nu_2|}\left(\dfrac{T}{\tau_1}+1\right)\left(\|(\omega_0,z_0(-\phi s))\|_{H}^2+\|\varphi\|^2_{L^1(0,T;L^2(\Omega))}\right),
\end{split}
\end{equation}
where we have used Cauchy-Schwarz inequality, Young inequality, estimates \eqref{eq2.25}, \eqref{eq2.42}, and \eqref{eq2.50}.  Therefore, taking any $\epsilon>0$ in \eqref{eq2.53}, there exists $C>0$ such that
\begin{equation}
\label{eq2.54}
\begin{array}{l}
\displaystyle
\|\omega\|_{L^2(0,T;H^2_0(\Omega))}^2=\|\partial_x^2 \omega\|_{L^2(0,T;L^2(\Omega))}^2\leq C\left(\|(\omega_0,z_0(-\phi s))\|_{H}^2+\|\varphi\|^2_{L^1(0,T;L^2(\Omega))}\right).\\
\end{array}
\end{equation}
The estimate \eqref{M} follows directly from the estimates \eqref{eq2.39} and \eqref{eq2.54}, and item (a) is achieved.

\vspace{0.2cm}
\noindent\textbf{Proof of item (b).} Given $\omega,v\in Y_{T}$ we have, for $p=1,$ that
  \begin{equation}
  \label{eq2.55}
    \|\omega\partial_x \omega\|_{L^1(0,T;L^2(\Omega))}\leq k\int_0^T\|\omega\|_{L^2(\Omega)}\|\partial_x \omega\|dt\leq k\int_0^T\|\omega\|_{H^ 2(\Omega)}^2dt\leq k\|\omega\|_{Y_{T}}^2<\infty,
  \end{equation}
   where $k$ is the positive constant of the Sobolev embedding $L^2(\Omega)\hookrightarrow L^\infty(\Omega)$. Therefore, $\omega\partial_x \omega\in L^1(0,T;L^2(\Omega)),$ for each $\omega\in Y_{T}. $ Thus, using the triangle inequality, together with the Cauchy-Schwarz inequality, we get the classical estimate
  \begin{equation}
  \label{eq2.56}
 \|\mathcal{F}(\omega)-\mathcal{F}(v)\|_{L^1(0,T;L^2(\Omega))}\leq  k\|\omega-v\|_{Y_{T}}\left(\| \omega\|_{Y_{T}} +\|v\|_{Y_{T}}\right), \quad \text{for any} \, u,v \in Y_{T}.
 \end{equation}
 Therefore, the map $\mathcal{F}$ is continuous concerning the corresponding topologies. On the other hand, when $p=2,$ we have for $\omega,v\in Y_{T}$ that
\begin{equation}
\label{eq2.58}
\|\mathcal{F}(\omega)\|_{L^1(0,T;L^2(\Omega))}\leq k\|\omega\|_{C(0,T; L^2(\Omega))}\int_{0}^T \|\omega\|_{H^2(\Omega)}^2dt\leq k\|\omega\|_{Y_{T}}^3<+\infty.\end{equation}
 Hence, $\mathcal{F}(\omega)$ is well-defined and {for any} $u,v$ in $Y_{T}$, we have
\begin{equation}
\label{eq2.59}
\begin{split}
 \|\mathcal{F}(\omega)-\mathcal{F}(v)\|_{L^1(0,T;L^2(\Omega))}
 \leq & \dfrac{3k}{2}\left(\|\omega\|_{Y_{T}}^2+ \|v\|_{Y_{T}} ^2\right)\|\omega-v\|_{Y_{T}}.\\
\end{split}
\end{equation}
 Thereby, the map $\mathcal{F}$ is continuous for the corresponding topologies. 
 \end{proof}

\subsection{Nonlinear problem} We are now in a position to prove the main result of the section. Precisely, the next result gives the well-posedness for the full system  \eqref{sis1}.
\begin{theorem}
\label{theorem3} Suppose that \eqref{ab} holds. Then, there exist constants $r,C>0$ such that, for every $\Lambda_0=(\omega_0,z_0(-\cdot))\in H$ with $\|\Lambda_0\|^2_H\leq r,$ the problem \eqref{sis1} admits a unique global solution $\omega\in Y_{T},$ which satisfies $\|\omega\|_{Y_{T}}\leq C\|\Lambda_0\|_H.$
\end{theorem}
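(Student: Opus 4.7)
The plan is to prove Theorem \ref{theorem3} via a Banach fixed point argument, treating the nonlinearity $u^p\partial_x u$ as a forcing term for the linear non-homogeneous system \eqref{sis_coupled_nh} and invoking Theorem \ref{teor2}. For $\Phi_0=(u_0,z_0(-\cdot))\in H$ with $\|\Phi_0\|_H^2\leq r$ and each $v\in X_T$, I would let $\Gamma(v):=u$ be the first component of the unique mild solution $\Phi=(u,z)$ of \eqref{sis_coupled_nh} with source $f=-v^p\partial_x v$ and initial datum $\Phi_0$; Theorem \ref{teor2}(b) ensures that $v^p\partial_x v\in L^1(0,T;L^2(\Omega))$, so the map $\Gamma:X_T\to X_T$ is well defined.

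To verify the self-mapping property, I would combine \eqref{M} with the nonlinear bound
$$\|v^p\partial_x v\|_{L^1(0,T;L^2(\Omega))}\leq k\,\|v\|_{X_T}^{p+1},$$
which comes from \eqref{eq2.55} for $p=1$, \eqref{eq2.58} for $p=2$, and a density argument for $p\in(1,2)$. This yields
$$\|\Gamma(v)\|_{X_T}\leq \sqrt{C}\,\|\Phi_0\|_H+\sqrt{C}\,k\,\|v\|_{X_T}^{p+1},$$
so, setting $R:=2\sqrt{C}\,\|\Phi_0\|_H$ and requiring $r$ small enough that $\sqrt{C}\,k\,R^{p+1}\leq \sqrt{C}\,\|\Phi_0\|_H$, the map $\Gamma$ sends the closed ball $B_R=\{v\in X_T:\|v\|_{X_T}\leq R\}$ into itself.

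For contraction, observe that $\Gamma(v_1)-\Gamma(v_2)$ is the $u$-component of the solution to \eqref{sis_coupled_nh} with zero initial datum and source $-(v_1^p\partial_x v_1-v_2^p\partial_x v_2)$. Applying \eqref{M} together with the Lipschitz-type estimates \eqref{eq2.56} and \eqref{eq2.59} (again with density for $p\in(1,2)$) gives
$$\|\Gamma(v_1)-\Gamma(v_2)\|_{X_T}\leq \sqrt{C}\,K\bigl(\|v_1\|_{X_T}^{p}+\|v_2\|_{X_T}^{p}\bigr)\|v_1-v_2\|_{X_T}\leq 2\sqrt{C}\,K\,R^{p}\,\|v_1-v_2\|_{X_T}$$
on $B_R$, and a further shrinking of $r$ produces a strict contraction. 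Banach's fixed point theorem then furnishes a unique $u\in B_R$ with $\Gamma(u)=u$, namely the mild solution of \eqref{sis1} on $[0,T]$, which automatically obeys $\|u\|_{X_T}\leq 2\sqrt{C}\,\|\Phi_0\|_H$.

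To upgrade this to a global solution I would use the dissipativity of $A$ from Proposition \ref{linear}(i): since $u(t,0)=u(t,L)=0$, the nonlinear contribution $\int_0^L u\cdot u^p\partial_x u\,dx=\frac{1}{p+2}[u^{p+2}]_0^L$ vanishes, and the energy identity underlying \eqref{dissipatividade_de_A} reduces to $\tfrac{d}{dt}\|\Phi(t)\|_H^2=\langle P\,V(t),V(t)\rangle_{\mathbb{R}^2}\leq 0$, so $\|\Phi(t)\|_H\leq \|\Phi_0\|_H$ throughout the existence interval. Consequently the smallness condition persists and the fixed-point argument can be restarted on successive intervals $[kT,(k+1)T]$, patching the local solutions into a global one. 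The main obstacle is the $T$-dependence of the constant $C$ in \eqref{M} (visible through factors like $\tau_2(T/\tau_1+1)/|\beta|$ in \eqref{eq2.42}): one must fix the horizon $T$ up front, select $r$ so small that both the self-mapping and contraction conditions hold with the corresponding $C(T)$, and rely on the non-increase of $\|\Phi(t)\|_H$ to legitimize iterating with the same threshold on each subinterval.
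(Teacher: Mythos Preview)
Your proposal is correct and follows essentially the same route as the paper: both arguments cast the nonlinear problem as a fixed-point equation for the Duhamel map built from Theorem~\ref{teor2}(a), feed in the nonlinear estimates \eqref{eq2.55}--\eqref{eq2.59} to get self-mapping and contraction on a small ball in $X_T$, and then invoke the energy dissipation $E'(t)=\langle PV,V\rangle_{\mathbb{R}^2}\leq 0$ (with the nonlinear contribution vanishing by the boundary conditions) to globalize. The only cosmetic difference is bookkeeping: the paper fixes an absolute radius $R$ and threshold $r$ (e.g.\ $R=\tfrac{1}{5k^2C}$, $r=\tfrac{1}{25k^2C^2}$ for $p=1$), whereas you take the data-dependent radius $R=2\sqrt{C}\,\|\Phi_0\|_H$, which has the small advantage of giving the bound $\|u\|_{X_T}\leq C\|\Phi_0\|_H$ directly; your explicit discussion of fixing $T$ once and iterating on $[kT,(k+1)T]$ via the non-increase of $\|\Phi(t)\|_H$ is also a bit more detailed than the paper's one-line appeal to dissipation.
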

\begin{proof}
Given $\Lambda_0=(\omega_0,z_0(-\cdot))\in H$ such that $\|\Lambda_0\|^2_H\leq r,$ where $r$ is a positive constant to be chosen, define a mapping $\Upsilon:Y_{T}\rightarrow Y_{T}$ as follows: $\Upsilon(\omega)=y,$ where $y$ is the solution of \eqref{sis_coupled_nh} with a source term $\varphi=\omega^p\partial_x \omega=\mathcal{F}(\omega),\ p\in\{1,2\}$. The mapping $\Upsilon$ is well defined because of item $(a)$ of Theorem \ref{teor2} from which we obtain from \eqref{M} that
$$\|\Upsilon(\omega)\|_{Y_{T}}^2\leq C\left(\|\Lambda_0\|_H^2+\|\mathcal{F}(\omega)\|^2_{L^1(0,T:L^2 (\Omega))}\right).$$
Note that $\Upsilon(\omega)-\Upsilon(v)$ is a solution of \eqref{sis_coupled_nh} with initial condition $\Lambda_0=(0,0)\in H$ and source term $\varphi=\mathcal{F}(\omega)-\mathcal{F} (v).$ It follows from \eqref{M} that
$$\|\Upsilon(\omega)-\Upsilon(v)\|_{Y_{T}}^2\leq C\|\mathcal{F}(\omega)-\mathcal{F}(v)\|_{L^1(0,T:L^2 (\Omega))}^2,$$
where the constant $C>0$ above does not depend on $\Lambda_0$ and $\varphi.$

Now, considering $p=1$, we have from \eqref{eq2.55} that
$$\|\Upsilon(\omega)\|_{Y_{T}}^2\leq C\left(r+k^2\|\omega\|_{Y_{T}}^4\right),\ \forall \omega\in Y_{T},$$
while from \eqref{eq2.56}, we have that
$$\|\Upsilon(\omega)-\Upsilon(v)\|_{Y_{T}}^2\leq Ck^2\left(\|\omega\|_{Y_{T}}^2+\|v\|_{Y_{T}}^2\right)^2\|\omega-v\|_{Y_{T}}^2,\ \forall \omega, v \in Y_{T}.$$
Thus, when $\|\omega\|_{Y_{T}}^2\leq R$ we get
\begin{equation}
\label{eq3.31}
\begin{array}{rcl}
\|\Upsilon(\omega)\|_{Y_{T}}^2&\leq&C\left(r+k^2R^2\right),\ \forall \omega\in \mathcal{B},\\
&&\\
\|\Upsilon(\omega)-\Upsilon(v)\|_{Y_{T}}^2&\leq&4Ck^2R^2 \|\omega-v\|_{Y_{T}}^2,\ \forall \omega, v \in \mathcal{B}.
\end{array}
\end{equation}
Next, pick $R=\dfrac{1}{5k^2C}$ and $r=\dfrac{1}{25k^2C^2}$. For  $\omega\in\mathcal{B}=\{\omega\in Y_{T}; \|\omega\|_{Y_{T}}^2\leq R\},$ we have that
\begin{equation}
\label{eq3.32}
\begin{array}{rcl}
\|\Upsilon(\omega)\|_{Y_{T}}^2&\leq& R,\ \forall \omega\in\mathcal{B},\\
&&\\
\|\Upsilon(\omega)-\Upsilon(v)\|_{Y_{T}}^2&\leq&\dfrac{4}{5}\|\omega-v\|_{Y_{T}}^2,\ \forall \omega, v \in\mathcal{B}.
\end{array}
\end{equation}
On the other hand, when $p=2$, we have from \eqref{eq2.58} that
$$\|\Upsilon(\omega)\|_{Y_{T}}^2\leq C\left(r+k^2\|\omega\|_{Y_{T}}^6\right),\ \forall \omega\in Y_{T}$$
and from \eqref{eq2.59}, we have that
$$\|\Upsilon(\omega)-\Upsilon(v)\|_{Y_{T}}^2\leq C\left(\dfrac{3k}{2}\right)^2\left(\|\omega\|_{Y_{T}}^2+\|v\|_{Y_{T}}^2\right)^2\|\omega-v\|_{Y_{T}}^2,\ \forall \omega, v \in Y_{T}.$$
Thus, when $\|\omega\|_{Y_{T}}^2\leq R$, we get
\begin{equation}
\label{eq3.33}
\begin{array}{rcl}
\|\Upsilon(\omega)\|_{Y_{T}}^2&\leq&C\left(r+k^2R^3\right),\ \forall \omega\in \mathcal{B},\\
&&\\
\|\Upsilon(\omega)-\Upsilon(v)\|_{Y_{T}}^2&\leq&9Ck^2R^2 \|\omega-v\|_{Y_{T}}^2,\ \forall \omega, v \in\mathcal{B}.
\end{array}
\end{equation}
Therefore, just take $R=\dfrac{1}{4k\sqrt{C}}$ and $r=\dfrac{1}{16kC^\frac{3}{2}}$ and we will have that
\begin{equation}
\label{eq3.34}
\begin{array}{rcl}
\|\Upsilon(\omega)\|_{Y_{T}}^2&\leq& R,\ \forall \omega\in\mathcal{B},\\
&&\\
\|\Upsilon(\omega)-\Upsilon(v)\|_{Y_{T}}^2&\leq&\dfrac{9}{16}\|\omega-v\|_{Y_{T}}^2,\ \forall \forall \omega, v \in\mathcal{B}.
\end{array}
\end{equation}
Consequently, due to \eqref{eq3.32} and \eqref{eq3.33}, the restriction of the map $\Lambda$ to $\mathcal{B}$ is well-defined, and $\Lambda$ is a contraction on the ball $\mathcal{B}.$ As an application of Banach Fixed Point Theorem, the map $\Lambda$ possesses a unique fixed element $\omega,$ which turns out to be the unique solution to problem \eqref{sis1}. 
Finally, the solution is global thanks to the dissipation property. Indeed, the energy $\mathcal{E}(t)$ (see \eqref{energia}) of the system \eqref{sis1} satisfies
$$
\mathcal{E}^{\prime}(t) \leq \frac{1}{2}\langle G X, X\rangle_{\mathbb{R}^2} \leq 0,
$$
where $G$ and $X$ are given in Proposition \ref{linear}.
\end{proof}

\section{Exponential stability of solutions}\label{Sec3}
In this section, we will prove the two main results of our work. The first stabilization result will be proved \textit{via} the Lyapunov approach. The second one is obtained showing an \textit{observability inequality} which will be proved by the compactness-uniqueness argument.
\subsection{Proof of Theorem \ref{Lyapunov}}
Initially, let us remember that the energy of the system \eqref{sis_coupled_nh}, for $\varphi=\omega^p\partial_x \omega$, with $p\in\{1,2\}$, is defined by
$$\displaystyle \mathcal{E}(t)=\|\Lambda(t)\|_H^2=\|\omega(t)\|^2+\|z(t)\|^2_{L^2(\mathcal{Q })},$$
where $\displaystyle \|z(t)\|^2_{L^2(\mathcal{Q})}=|\nu_2|\int_{\mathcal{M}}s\sigma(s)\int_{0}^{1}z^2(t,\phi,s)d\phi ds.$ Thus, using \eqref{sis_coupled_nh}, we get
\begin{equation}
\label{derivative energi}
\begin{array}{rcl}
\mathcal{E}^{\prime}(t)&=&2\langle\Lambda_t(t),\Lambda(t)\rangle_H=2\langle A\Lambda(t),\Lambda(t)\rangle_H+2\langle(\omega^p\partial_x \omega,0),\Lambda(t)\rangle_H\\
&=&\displaystyle \langle GX,X\rangle_{\R^2}+2\int_{\Omega} u^{p+1}\partial_x \omega dx\\
&=&\displaystyle \langle GX,X\rangle_{\R^2}+2\dfrac{\omega^{p+2}(\ell)}{p+2}-2\dfrac{\omega^{p+2}(0)}{p+2}=\langle GX,X\rangle_{\R^2}\leq 0,
\end{array}
\end{equation}
where $G$ and $X$ were given in \eqref{dissipatividade_de_A}. Let us now define a Lyapunov function
$$\Phi(t)=\mathcal{E}(t)+\mu_1E_1(t)+\mu_2 E_2(t), \ t\geq 0,$$
where $E_1(t)$ and $E_2(t)$ are given by
\begin{equation*}
E_1(t)=\int_{\Omega} xu^2(x,t)dx \quad \text{and} \quad E_2(t)=|\nu_2|\int_{\Omega_0} \int_{\mathcal{M}}se^{-\delta \phi s}\sigma(s) z^2(t,\phi, s)dsd\phi,
\end{equation*}
$\mu_1$ and $\mu_2$ are positive constants to be determined and  $\delta>0$ is  arbitrary constant. Note that
$$ \mu_1 E_1(t)=\mu_1\int_{\Omega} xu^2(x,t)dx\leq \ell\mu_1\int_{\Omega} \omega^2(x,t)dx= \ell\mu_1\|\omega\|^2
$$
and
$$\mu_2 E_2(t)\leq   \mu_2|\nu_2|\int_{\Omega_0} \int_{\mathcal{M}}s\sigma(s) z^2(t,\phi, s)dsd\phi=\mu_2\| z(t)\|^2_{\ell^2(\mathcal{Q})}.$$
Consequently,
$$\mu_1E_1(t)+\mu_2E_2(t)\leq \max\{\ell\mu_1,\mu_2\}\mathcal{E}(t)$$
and, therefore
\begin{equation}
\label{eq3.8}
\mathcal{E}(t)\leq \Phi(t)\leq \left(1+\max\{\ell\mu_1,\mu_2\}\right)\mathcal{E}(t).
\end{equation}
Differentiating $E_1(t)$ and $E_2(t)$ using integration by parts and the boundary conditions of \eqref{sis1} and  \eqref{sis2.1_coupled}, we get
\begin{equation}
\label{eq3.9}
\begin{array}{rcl}
E_1'(t)&=&\displaystyle  \alpha\| \omega\|^2-3\beta \|\partial_x \omega\|^2-5\|\partial_x^2 \omega\|^2+\dfrac{2}{p+2}\int_{\Omega}  \omega^{p+2}dx\\
&&\displaystyle+\ell\left[\nu_1^2\left(\partial_x^2\omega(t,0)\right)^2+2\nu_1\nu_2\left(\partial_x^2\omega(t,0)\right)\left(\int_{\mathcal{M}}\sigma(s)z(t,1,s)ds\right)\right.\\
&&\displaystyle\left.+\nu_2^2\left(\int_{\mathcal{M}}\sigma(s)z(t,1,s)ds\right)^2\right]
\end{array}
\end{equation}
and
\begin{equation}
\label{eq3.10}
\begin{array}{rcl}
E_2'(t)&=&\displaystyle - |\nu_2|\int_{\mathcal{M}}e^{-\delta s}\sigma(s)\left( z(t,1,s)\right)^2 ds+|\nu_2|\left(\int_{\mathcal{M}}\sigma(s) ds\right)\left( \partial_x^2\omega(t,0)\right)^2\\
&&\displaystyle -|\nu_2|\int_{\mathcal{M}}\int_{\Omega_0}\delta s e^{-\delta\phi s}\sigma(s) z^2 d\phi ds.
\end{array}
\end{equation}
Thus, for $\Phi(t)=\mathcal{E}(t)+\mu_1E_1(t)+\mu_2E_2(t)$, we find that
\begin{equation*}
\begin{split}
 \Phi'(t)+&2\mu \Phi(t)
 %&=&\displaystyle  E'(t)+\mu_1E_1'(t)+\mu_2E_2'(t)+2\mu \mathcal{E}(t)+2\mu \mu_1E_1(t)+2\mu \mu_2E_2(t)\\
%&&\\
=  \langle GX,X\rangle_{\R^2}+\alpha\mu_1\| \omega\|^2-3\beta\mu_1 \|\partial_x \omega\|^2-5\mu_1\|\partial_x^2 \omega\|^2+\dfrac{2\mu_1}{p+2}\int_{\Omega}  \omega^{p+2}dx\\
&+\ell\mu_1\left[\nu_1^2\left(\partial_x^2\omega(t,0)\right)^2+2\nu_1\nu_2\left(\partial_x^2\omega(t,0)\right)\left(\int_{\mathcal{M}}\sigma(s)z(t,1,s)ds\right)\right.\\
&\left.+\nu_2^2\left(\int_{\mathcal{M}}\sigma(s)z(t,1,s)ds\right)^2\right]\\
& - \mu_2|\nu_2|\int_{\mathcal{M}}e^{-\delta s}\sigma(s)\left( z(t,1,s)\right)^2 ds+\mu_2|\nu_2|\left(\int_{\mathcal{M}}\sigma(s) ds\right)\left( \partial_x^2\omega(t,0)\right)^2\\
& -\mu_2|\nu_2|\int_{\mathcal{M}}\int_{\Omega_0}\delta s e^{-\delta\phi s}\sigma(s) z^2 d\phi ds +2\mu\|\omega(t)\|^2+2\mu \|z(t)\|^2_{L^2(\mathcal{Q })}+2\mu\mu_1\int_{\Omega} xu^2(x,t)dx\\
& +2\mu\mu_1 |\nu_2|\int_{\Omega_0} \int_{\mathcal{M}}se^{-\delta \phi s}\sigma(s) z(t,\phi, s)dsd\phi.
\end{split}
\end{equation*}
Next, let
$$G_{\mu_1}=\mu_1\ell\left(\begin{array}{cc}
\nu_1^2&\nu_1\nu_2\\
\nu_1\nu_2&\nu_2^2\\
\end{array}\right) , \ G_{\mu_2}=\mu_2\left(\begin{array}{cc}
|\nu_2| \displaystyle \int_{\mathcal{M}}\sigma(s)ds&0\\
0&0\\
\end{array}\right)
$$
and
$$X=\left(\begin{array}{c}
\partial_x^2 \omega(t,0)\\
\displaystyle \int_{\mathcal{M}}\sigma(s)z(t,1,s)ds\\
\end{array}\right).$$ Thus, we have that
\begin{equation*}
\begin{split}
 \Phi'(t)+&2\mu \Phi(t)
=  \langle (G+G_{\mu_1}+G_{\mu_2})X,X\rangle_{\R^2}+(\alpha\mu_1+2\mu)\| \omega\|^2-3\beta\mu_1 \|\partial_x \omega\|^2-5\mu_1\|\partial_x^2 \omega\|^2\\
&+\dfrac{2\mu_1}{p+2}\int_{\Omega}  \omega^{p+2}dx- \mu_2|\nu_2|\int_{\mathcal{M}}e^{-\delta s}\sigma(s)\left( z(t,1,s)\right)^2 ds\\
& -\mu_2|\nu_2|\int_{\mathcal{M}}\int_{\Omega_0}\delta s e^{-\delta\phi s}\sigma(s) z^2 d\phi ds +2\mu \|z(t)\|^2_{L^2(\mathcal{Q })}+2\mu\mu_1\int_{\Omega} xu^2(x,t)dx\\
& +2\mu\mu_1 |\nu_2|\int_{\Omega_0} \int_{\mathcal{M}}se^{-\delta \phi s}\sigma(s) z(t,\phi, s)dsd\phi\\
\leq&  \langle (G+G_{\mu_1}+G_{\mu_2})X,X\rangle_{\R^2}+\left(\alpha\mu_1+2\mu(1+\mu_1\ell)\right)\| \omega\|^2-3\beta\mu_1 \|\partial_x \omega\|^2-5\mu_1\|\partial_x^2 \omega\|^2\\
&+\dfrac{2\mu_1}{p+2}\int_{\Omega}  \omega^{p+2}dx- \mu_2|\nu_2|e^{-\delta \tau_2}\int_{\mathcal{M}}\sigma(s)\left( z(t,1,s)\right)^2 ds\\
& -\mu_2|\nu_2|e^{-\delta \tau_2}\delta\int_{\mathcal{M}}\int_{\Omega_0} s\sigma(s) z^2 d\phi ds\\
& +2\mu \|z(t)\|^2_{L^2(\mathcal{Q })}+2\mu\mu_1 |\nu_2|\int_{\Omega_0} \int_{\mathcal{M}}s\sigma(s) z(t,\phi, s)dsd\phi.
\end{split}
\end{equation*}
Now, observe that $$T(\mu_1,\mu_2):=G+G_{\mu_1}+G_{\mu_2}=G+\mu_1\ell\left(\begin{array}{cc}
\nu_1^2&\nu_1\nu_2\\
\nu_1\nu_2&\nu_2^2\\
\end{array}\right)+\mu_2\left(\begin{array}{cc}
|\nu_2| \int_{\mathcal{M}} \sigma(s)ds&0\\
0&0\\
\end{array}\right)$$
is a continuous map of $\R^2$ on the vector space of square matrices $M_{2\times 2}(\R)$ and that the determinant and trace are continuous functions of $M_{2\times 2} (\R)$ over $\R,$ we have that $h_1(\mu_1,\mu_2)=\det T(\mu_1,\mu_2)$ and $h_2(\mu_1,\mu_2)=\tr T(\mu_1,\mu_2)$ are continuous from $\R^2$ over $\R.$ Therefore, knowing that $h_1(0,0)=\det G>0$ and $h_2(0,0)=\tr G<0$ for $\mu_1,\mu_2$ small enough, one can claim that $h_1(\mu_1,\mu_2)>0$ and $h_2(\mu_1,\mu_2)<0.$ Thereby, $G+G_{\mu_1}+ G_{\mu_2}$ is negative defined for $\mu_1,\mu_2$ small enough.
Moreover, using the Poincar\'e inequality\footnote{$\|\omega\|^2\leq\dfrac{\ell^2}{\pi^2}\|\partial_x \omega\|^2$, for $\omega\in H_0^2(\Omega),$} we find
\begin{equation}
\label{eq3.16}
\begin{split}
 \Phi'(t)+2\mu \Phi(t)
\leq& \left[\dfrac{\ell^2}{\pi^2}\left(\alpha\mu_1+2\mu(1+\mu_1\ell)\right)-3\beta\mu_1\right]\|\partial_x \omega\|^2-5\mu_1\|\partial_x^2 \omega\|^2\\
&+\dfrac{2\mu_1}{p+2}\int_{\Omega}  \omega^{p+2}dx- \mu_2|\nu_2|e^{-\delta \tau_2}\int_{\mathcal{M}}\sigma(s)\left( z(t,1,s)\right)^2 ds\\
& +\left(2\mu(1+ \mu_1 |\nu_2|)-\mu_2|\nu_2|e^{-\delta \tau_2}\delta\right)\|z(t)\|^2_{L^2(\mathcal{Q })}.
\end{split}
\end{equation}

Now, we are going to estimate the integral $$ \dfrac{2\mu_1}{p+2}\int_{\Omega}  \omega^{p+2}dx.$$
For this, applying the Cauchy-Schwarz inequality and using the fact that the energy of the system $\mathcal{E}(t)$ is non-increasing, together with the embedding $H_0^1(\Omega)\hookrightarrow L^\infty(\Omega)$ we have, for $\|(\omega_0,z_0)\|_H<r$, that
\begin{equation}
\label{eq3.17}
\begin{array}{rcl}
\displaystyle \dfrac{2\mu_1}{p+2}\int_{\Omega}  \omega^{p+2}dx&\leq&\displaystyle \dfrac{2\mu_1}{p+2}\|\omega\|^2_{L^\infty(\Omega)}\int_{\Omega}  \omega^pdx\leq \dfrac{2\ell\mu_1}{p+2}\|\partial_x \omega\|^2\int_{\Omega}  \omega^pdx\\
&\leq&\displaystyle  \dfrac{2\ell\mu_1}{p+2}\|\partial_x \omega\|^2 \ell^{1-\frac{p}{2}}\|\omega\|^p\leq \dfrac{2\ell^{2-\frac{p}{2}}\mu_1}{p+2}\|\partial_x \omega\|^2\|(\omega_0,z_0)\|_H^p\\
&\leq&\displaystyle  \dfrac{2\ell^{2-\frac{p}{2}}\mu_1r^p}{p+2}\|\partial_x \omega\|^2.
\end{array}
\end{equation}
Combining \eqref{eq3.17} and \eqref{eq3.16} yields
\begin{equation}
\label{eq3.18}
\begin{split}
  \Phi'(t)+2\mu \Phi(t)\leq & \left[\dfrac{\ell^2}{\pi^2}\left(\alpha\mu_1+2\mu(1+\mu_1\ell)\right)-3\beta\mu_1\right]\|\partial_x \omega\|^2-5\mu_1\|\partial_x^2 \omega\|^2\\
&+\dfrac{2\ell^{2-\frac{p}{2}}\mu_1r^p}{p+2}\|\partial_x \omega\|^2- \mu_2|\nu_2|e^{-\delta \tau_2}\int_{\mathcal{M}}\sigma(s)\left( z(t,1,s)\right)^2 ds\\
& +\left(2\mu(1+ \mu_1 |\nu_2|)-\mu_2|\nu_2|e^{-\delta \tau_2}\delta\right)\|z(t)\|^2_{L^2(\mathcal{Q })}\\
\leq&  \left[\dfrac{\ell^2}{\pi^2}\left(\alpha\mu_1+2\mu(1+\mu_1\ell)\right)-3\beta\mu_1+\dfrac{2\ell^{2-\frac{p}{2}}\mu_1r^p}{p+2}\right]\|\partial_x \omega\|^2-5\mu_1\|\partial_x^2 \omega\|^2\\
& +\left(2\mu(1+ \mu_1 |\nu_2|)-\mu_2|\nu_2|e^{-\delta \tau_2}\delta\right)\|z(t)\|^2_{L^2(\mathcal{Q })}.
\end{split}
\end{equation}
Note that $\Phi'(t)+2\mu \Phi(t)<0$ when $$2\mu(1+ \mu_1 |\nu_2|)-\mu_2|\nu_2|e^{-\delta \tau_2}\delta<0$$ and $$\dfrac{\ell^2}{\pi^2}\left(\alpha\mu_1+2\mu(1+\mu_1\ell)\right)-3\beta\mu_1
+\dfrac{2\ell^{2-\frac{p}{2}}\mu_1r^p}{p+2}<0,$$
which holds for $\mu>0$ satisfying, respectively
$$\mu<\dfrac{\mu_2 |\nu_2|e^{-\delta\tau_2}\delta}{2(1+\mu_1|\nu_2|)}$$
and
$$0<\mu<\dfrac{\mu_1}{2\ell^2(1+\ell\mu_1)(p+2)}\left[(p+2)(3\pi^2\beta-\alpha \ell^2)-2\pi^2\ell^{2-\frac{p}{2}}r^p\right],$$
where we need to take $r>0$ satisfying
$$(p+2)(3\pi^2\beta-\alpha \ell^2)-2\pi^2\ell^{2-\frac{p}{2}}r^p>0$$
or, equivalently, $r>0$ must satisfy
$$r<\left(\dfrac{(p+2)(3\pi^2\beta-\alpha \ell^2)}{2\pi^2\ell^{2-\frac{p}{2}}}\right)^\frac{1}{p}.$$
Thus, for $\mu_1,\mu_2$ small enough and an arbitrary $\delta>0$, taking $$r<\left(\dfrac{(p+2)(3\pi^2\beta-\alpha \ell^2)}{2\pi^2\ell^{2-\frac{p}{2}}}\right)^\frac{1}{p}$$ and $$\mu<\min\left\{\dfrac{\mu_2 |\nu_2|e^{-\delta\tau_2}\delta}{2(1+\mu_1|\nu_2|)},
\dfrac{\mu_1}{2\ell^2(1+\ell\mu_1)(p+2)}\left[(p+2)(3\pi^2\beta-\alpha \ell^2)-2\pi^2\ell^{2
-\frac{p}{2}}r^p\right]\right\},$$
we get that
$$\Phi'(t)+2\mu \Phi(t)<0 \iff \Phi(t)\leq \Phi(0)e^{-2\mu t}.$$
Lastly, from \eqref{eq3.8}, we get
$$\mathcal{E}(t)\leq \Phi(t)\leq \Phi(0)e^{-2\mu t}\leq (1+\max\{\ell\mu_1,\mu_2\})E(0)e^{-2\mu t}\leq \kappa E(0)e^{-2\mu t},$$
for $\kappa>1+\max\{\ell\mu_1,\mu_2\}$, proving the theorem. \qed

\subsection{Proof of Theorem \ref{comp}}
First, we deal with the linear system \eqref{sis2.1_coupled} and claim that the following observability inequality holds
\begin{equation}\label{OI}
\|\omega_{0}\|^2+\| z_{0}\|_{L^2 (\mathcal{Q})}^2 \leq
C \int_{0}^{T}\left((\partial^2_x \omega(t,0))^{2}+\int_{\mathcal{M}} s \sigma(s)   z^2 (t,1,s) \, ds \right) d t,
\end{equation}
where  $\left(\omega_{0}, z_{0}\right) \in H$ and $(\omega, z)(t)=e^{t A} \left(\omega_{0}, z_{0}\right)$ is the unique solution of \eqref{sis2.1_coupled}. This leads to the exponential stability in $H$ of the solution $(y,z)$ to \eqref{sis2.1_coupled}. The proof of this inequality can be obtained by a contradiction argument. Indeed, if \eqref{OI} is not true, then there exists a sequence  $\{\left(\omega_{0}^{n}, z_{0}^{n}\right)\}_{n} \subset H$ such that
\begin{equation}\label{unit}
\|\omega_{0}^{n}\|^{2}+\|z_{0}^{n}\|^{2}_{L^2 (\mathcal{Q})}=1
\end{equation}
and
\begin{equation}\label{n=0}
\left\| \partial^2_x \omega^{n}( \cdot, 0)\right\|_{L^{2}(0, T)}^{2}+\int_{\mathcal{M}} s \sigma(s)   z^2 (t,1,s) \, ds \rightarrow 0 \text { as } n \rightarrow+\infty,
\end{equation}
where $\left(\omega^{n}, z^{n}\right)(t)=e^{tA} \left(\omega_{0}^{n}, z_{0}^{n}\right)$.
Then, arguing as in \cite{luan}, we can deduce from Proposition \ref{linear} that  $\{\omega^{n}\}_{n}$ is convergent in $L^{2}\left(0, T, L^{2}(\Omega)\right)$. Moreover, $\{\omega_{0}^{n}\}_{n}$ is a Cauchy sequence in $L^{2}(\Omega)$, while $\{z_{0}^{n}\}_n$ is a Cauchy sequence in $L^{2}(\mathcal{Q})$. Thereafter, let $\left(\omega_{0}, z_{0}\right)=\lim _{n \rightarrow \infty}\left(\omega_{0}^{n}, z_{0}^{n}\right) \;\; \mbox{in} \; H$ and hence
$\|\omega_{0}\|^{2}+\|z_{0}\|^{2}_{L^2 (\mathcal{Q})}=1,$ by virtue of \eqref{unit}. Next, take $(\omega,z)=e^{\cdot A} \left(\omega_{0}, z_{0}\right),$ and assume, for the sake of simplicity and without loss of generality, that $\alpha=\beta=1$. This, together with Proposition \ref{linear} and \eqref{n=0}, implies that $\omega$ is solution of the system
%\begin{equation}\label{kk}
$$
\begin{cases}
\partial_t \omega + \partial_x \omega+\partial^3_x \omega-\partial^5_x \omega=0, & x \in \Omega, t>0, \\
\omega(0, t)=\omega(\ell, t)=\partial_x \omega(\ell, t)=\partial_x \omega(0, t)=\partial^2_x \omega(\ell, t)=\partial^2_x \omega(0, t)=0, & t>0, \\
\omega(x, 0)=\omega_{0}(x), & x \in \Omega,
\end{cases}
$$
with
%\begin{equation}\label{cont}
$\left\|\omega_{0}\right\|_{L^{2}(\Omega)}=1 .$
%\end{equation}
The latter contradicts the result obtained in \cite[Lemma 4.2]{luan}, which states that the above system has only the trivial solution (see also Lemma \ref{lem2}). This proves the observability inequality \eqref{OI}.

Now, let us go back to the original system \eqref{sis1} and use the same arguments as in \cite{ro}. First, we restrict ourselves to the case $p=1$ as the case $p=2$ is similar. Next, consider an initial condition $\left\|\left(\omega_{0}, z_{0}\right)\right\|_H \leq \varrho,$
where $\varrho$ will be fixed later. Then, the solution $\omega$ of \eqref{sis1}  can be written as $\omega=\omega_1+\omega_2$, where  $\omega_1$ is the solution of \eqref{sis2.1_coupled} with the initial data $\left(\omega_{0}, z_{0}\right)\in H$ and $\omega_2$ is solution of \eqref{sis_coupled_nh} with null data and right-hand side $\varphi=\omega \partial_x \omega \in L^1(0,T;L^2(\Omega))$,  as in Lemma \ref{teor2}. In other words, $\omega_1$ is the solution of
$$\begin{cases}\partial_t \omega_{1}-\partial^5_x \omega_1+ \partial^3_x \omega_{1}+ \partial_x \omega_{1}=0, & x \in \Omega, t>0, \\
\omega_1(t,0)=\omega_1(t,\ell)=\partial_x \omega_1(t,0)=\partial_x \omega_1(t,\ell)=0, & t>0, \\
\partial^2_x \omega_{1}(t,\ell)=\nu_1 \partial^2_x \omega_{1}(t,0)+\nu_2 \displaystyle \int_{t-\tau_2}^{t-\tau_1} \sigma(t-s)  \partial_x^2 \omega (s,0) \, ds, & t>0, \\
\partial^2_x \omega_{1}(t,0)=z_{0}(t), & t \in(-\tau_2, 0), \\
\omega_1(0,x)=\omega_{0}(x), & x \in \Omega,\end{cases}$$
and $\omega_{2}$ is solution of
$$\begin{cases}
\partial_t \omega_{2}-\partial^5_x \omega_2+\partial^3_x \omega_{2}+ \partial_x \omega_{2}=-\omega \partial_x \omega, & x \in \Omega, t>0, \\
\omega_{2}(t,0)=\omega_{2}(t,\ell)=\partial_x \omega_2(t,0)=\partial_x \omega_2(t,\ell)=0, & t>0, \\
\partial^2_x \omega_{2}(t,\ell)=\nu_1 \partial^2_x \omega_{2}(t,0)+\nu_2 \displaystyle \int_{t-\tau_2}^{t-\tau_1} \sigma(t-s)  \partial_x^2 \omega (s,0) \, ds, & t \in(-\tau_2, 0), \\
\partial^2_x \omega_{2}(t,0)=0, & x \in \Omega,  \\
\omega_{2}(0,x)=0, & x \in\Omega.
\end{cases}$$
In light of the exponential stability of the linear system \eqref{sis2.1_coupled} (see the beginning of this subsection) and Theorem \ref{teor2}, we have
\begin{equation}\label{31}
\|(\omega(T), z(T))\|_{H}
\leq  \chi \left\|\left(\omega_{0}, z_{0}\right)\right\|_{H}+C\|\omega\|_{L^{2}\left(0, T, H^{2}(\Omega)\right)}^{2},
\end{equation}
in which $\chi \in(0,1)$. Subsequently, multiply \eqref{sis1}$_1$ by $xu$ and performing the same computations as for \eqref{eq3.9}, we get
\begin{equation}\label{q1}
\begin{split}
&\int_{\Omega} x \omega^2(T,x) d x+3 \int_{0}^{T}
\int_{\Omega}\left( \partial_x \omega(t,x)\right)^{2} d x d t+5\int_{0}^{T} \int_{\Omega}\left( \partial^2_xu(t,x)\right)^{2} d x dt = \\&
 \int_{0}^{T} \int_{\Omega} \omega^2(t,x) d x dt+\ell \int_0^T \left(\nu_1\partial_x^{2}\omega(t,0) +\nu_2  \int_{\mathcal{M}} \sigma(s) z(t,1,s) \, ds\right)^2 dt+ \int_{\Omega} x  \omega_{0}^2(x)  d x \\
 &+\frac{2}{3} \int_{0}^{T} \int_{\Omega} \omega^3 (t,x) dxdt.
\end{split}
\end{equation}
On one hand, multiplying the first equation of \eqref{sis1} by $\omega$ and arguing as done for \eqref{est1} (see \eqref{eq2.23}), we get
\begin{equation}\label{q2}
\displaystyle  \int_0^T \left(\nu_1\partial_x^{2}\omega(t,0) +\nu_2  \int_{\mathcal{M}} \sigma(s) z(t,1,s) \, ds\right)^2 dt \leq  C  \| (\omega_0, z_0 )\|_H^2.
\end{equation}
On the other hand, using Gagliardo–Nirenberg and Cauchy-Schwarz inequalities, together with the dissipativity of the system \eqref{sis1}, we deduce that
\begin{equation*}	
		\int_{0}^{T} \int_{\Omega} \omega^{3} d x d t  \leq C(T)\left\|(\omega_{0},z_0)\right\|^{2}_{H} \|\omega\|_{L^{2}\left(0, T ; H^{2}(\Omega)\right)}.
\end{equation*}
Applying Young's inequality to the last estimate and combining the obtained result with \eqref{q1}-\eqref{q2}, we reach
\begin{equation}\label{33}
\|\omega\|_{L^{2}\left(0, T ; H^{2}(\Omega)\right)}^{2}\leq C\left\|\left(\omega_{0}, z_{0}\right)\right\|_{H}^{2} \left(1+\left\|\left(\omega_{0}, z_{0}\right)\right\|_{H}^{2}\right).
\end{equation}
Finally, recalling that $\left\|\left(\omega_{0}, z_{0}\right)\right\|_H \leq \varrho,$ and inserting \eqref{33} into \eqref{31}, we get
$$
\|(\omega(T), z(T))\|_{H} \leq\left\|\left(\omega_{0}, z_{0}\right)\right\|_{H}\left(\chi+C \varrho+C \varrho^{3}\right).
$$
Given $\eta>0$ sufficiently small so that $\chi+\eta<1$, one can choose $\varrho$ small such that $\varrho+\varrho^{3}<\frac{\eta}{C}$, to obtain
$$
\|(\omega(T), z(T))\|_{H} \leq(\chi+\eta)\left\|\left(\omega_{0}, z_{0}\right)\right\|_{H}.
$$
Lastly, using the semigroup property and the fact that $\chi+\eta<1$, we conclude the exponential stability result of Theorem \ref{comp}.  \qed

\section{Conclusion} This article presented a study on the stability of the Kawahara equation with a boundary-damping control of finite memory type. It is shown that such a control is good enough to obtain the desirable property, namely, the exponential decay of the system's energy. The proof is based on two different approaches. The first one invokes a Lyapunov functional and provides an estimate of the energy decay. In turn, the second one uses a compactness-uniqueness argument that reduces the issue to a spectral problem. 

Finally, we would like to point out that our well-posedness result (see Theorem \ref{theorem3}) is shown for the nonlinearity ${\omega^p} \partial_x \omega$, where $p \in \{1,2\}$. Notwithstanding, we believe that using an interpolation argument, this finding should remain valid if $p \in (1,2)$. The same remark applies to the second stability result (see Theorem \ref{comp}). It is also noteworthy that our first stability outcome (see Theorem \ref{Lyapunov}) is established for a more general nonlinearity ${\omega^p} \partial_x \omega$, $p \in [1,2]$.

\subsection*{Acknowledgments}
This work is part of the Ph.D. thesis of de Jesus at the Department of Mathematics of the Federal University of Pernambuco and was done while the first author was visiting Virginia Tech. The first author thanks the host institution for their warm hospitality.

\subsection*{Authors contributions:} Capistrano-Filho, Chentouf and de Jesus work equality in Conceptualization; formal analysis; investigation; writing--original draft; writing--review and editing.

\subsection*{Conflict of interest statement} This work does not have any conflicts of interest.

\subsection*{Data availability statement} Is not applicable to this article as no new data were created or analyzed in this study.

\end{document}